\newcommand{\nospacepunct}[1]{\makebox[0pt][l]{\,#1}} 
\theoremstyle{plain}
\newtheorem{thm}{Theorem}[section]
\newtheorem*{thm*}{Theorem}
\newtheorem{cor}[thm]{Corollary}
\newtheorem*{cor*}{Corollary}
\newtheorem{prop}[thm]{Proposition}
\newtheorem*{prop*}{Proposition}
\newtheorem{lem}[thm]{Lemma}
\newtheorem*{lem*}{Lemma}
\newtheorem*{ex*}{Example}
\newtheorem*{exer*}{Exercise}
\newtheorem{q}[thm]{Question}
\newtheorem*{q*}{Question}
\newtheorem{conj}[thm]{Conjecture}
\newtheorem*{conj*}{Conjecture}
\theoremstyle{definition}
\newtheorem{defn}[thm]{Definition}
\newtheorem*{defn*}{Definition}
\theoremstyle{remark}
\newtheorem{rem}[thm]{Remark}
\newtheorem*{rem*}{Remark}
\theoremstyle{plain}
\newenvironment{manualtheorem}[1]{%
  \manualtheoreminner
}{\endmanualtheoreminner}
\newenvironment{manualcor}[1]{%
  \manualcorinner
}{\endmanualcorinner}
\Crefname{defn}{Definition}{Definitions}
\newcommand{\BS}{\mathrm{BS}}
\newcommand{\PO}{\mathrm{PO}}
\newcommand{\PU}{\mathrm{PU}}
\newcommand{\F}{\mathtt{F}}
\newcommand{\FP}{\mathtt{FP}}
\newcommand{\DF}[1]{\mathcal{D}_{\mathbb F{#1}}}
\newcommand{\N}{\mathbb{N}}
\newcommand{\Q}{\mathbb{Q}}
\newcommand{\R}{\mathbb{R}}
\newcommand{\Z}{\mathbb{Z}}
\newcommand{\inv}{^{-1}}
\DeclareMathOperator{\cd}{cd}
\DeclareMathOperator{\id}{id}
\DeclareMathOperator{\im}{im}
\DeclareMathOperator{\Ore}{Ore}
\DeclareMathOperator{\rk}{rk}
\DeclareMathOperator{\supp}{supp}
\DeclareMathOperator{\Tor}{Tor}
\newcommand{\verti}[1]{{\left\vert #1 \right\vert}}
\newcommand{\vertii}[1]{{\left\vert\kern-0.25ex\left\vert #1 \right\vert\kern-0.25ex\right\vert}}
\newcommand{\vertiii}[1]{{\left\vert\kern-0.25ex\left\vert\kern-0.25ex\left\vert #1 \right\vert\kern-0.25ex\right\vert\kern-0.25ex\right\vert}}
\title{Improved algebraic fibrings}
\author{Sam P.~Fisher}
\email{sam.fisher@maths.ox.ac.uk}
\address{University of Oxford, United Kingdom}
\subjclass[2020]{20F65}
\keywords{RFRS groups, algebraic fibring, finiteness properties, Hughes-free division rings}
\begin{document}

\begin{abstract}
    We show that a virtually RFRS group $G$ of type $\mathtt{FP}_n(\Q)$ virtually algebraically fibres with kernel of type $\mathtt{FP}_n(\Q)$ if and only if the first $n$ $\ell^2$-Betti  numbers of $G$ vanish, that is, $b_p^{(2)}(G) = 0$ for $0 \leqslant p \leqslant n$. This confirms a conjecture of Kielak. We also offer a variant of this result over other fields, in particular in positive characteristic.

    As an application of the main result, we show that amenable virtually RFRS groups of type $\mathtt{FP}(\Q)$ are virtually Abelian. It then follows that if $G$ is a virtually RFRS group of type $\mathtt{FP}(\Q)$ such that $\Z G$ is Noetherian, then $G$ is virtually Abelian. This confirms a conjecture of Baer for the class of virtually RFRS groups of type $\mathtt{FP}(\Q)$, which includes (for instance) the class of virtually compact special groups.
\end{abstract}

\maketitle


\section{Introduction}

A group $G$ is \textit{algebraically fibred} (or simply \textit{fibred}) if it admits a homomorphism onto $\Z$ with a finitely generated kernel. The interest in algebraic fibrings arose from the study of $3$-manifolds fibring over the circle. Using the long exact sequence of homotopy groups associated to a fibration, one sees that a surface bundle $M \rightarrow S^1$, where $M$ is a compact $3$-manifold, induces an algebraic fibration $\pi_1(M) \rightarrow \Z$. A celebrated theorem of Stallings \cite{Stallings3mflds} establishes the converse: if $G$ is isomorphic to the fundamental group of a closed, compact $3$-manifold $M$, then an algebraic fibration $G \rightarrow \Z$ is induced by a surface bundle $M \rightarrow S^1$.

Recently, Kielak characterised the virtual algebraic fibring of residually finite rationally solvable (RFRS) groups in terms of the vanishing of the first $\ell^2$-Betti number. More precisely, he showed the following.

\begin{thm}[Kielak \cite{KielakRFRS}] \label{thm:kielak}
    Let $G$ be an infinite finitely generated virtually RFRS group. Then $G$ virtually algebraically fibres if and only if $b_1^{(2)}(G) = 0$.
\end{thm}

Virtually RFRS groups arise naturally in geometric group theory; for example subgroups of right-angled Artin groups and right-angled Coxeter groups are virtually RFRS and, in particular, special groups (in the sense of Haglund and Wise) are RFRS. Moreover, the RFRS property passes to subgroups and is preserved by taking products and free products of RFRS groups. Kielak's theorem is an algebraic analogue of the following theorem of Agol, which was a key step in confirming Thurston's Virtually Fibred Conjecture. 

\begin{thm}[Agol \cite{AgolCritVirtFib}]
    Every compact and irreducible $3$-manifold $M$ with $\chi(M) = 0$ and nontrivial RFRS fundamental group admits a finite covering that fibres over the circle.
\end{thm}

Since algebraic fibrings induce topological fibrings of $3$-manifolds, Kielak's theorem generalises the above theorem of Agol by removing the assumption that $G$ is the fundamental group of a $3$-manifold. Note that \cite[Theorem 4.1]{Luck02} states that if $M$ is a compact, irreducible $3$-manifold with no $S^2$ boundary components, then $b_1^{(2)}(\pi_1(M)) = -\chi(M)$. Thus, we interpret the condition $b_1^{(2)}(G) = 0$ in Kielak's theorem as the algebraic analogue of the condition $\chi(M) = 0$ in Agol's theorem.

In light of Kielak's theorem, it is natural to ask whether the vanishing of higher $\ell^2$-Betti numbers of a group $G$ controls the higher finiteness properties of the kernel of the virtual fibration. Indeed, Kielak conjectured that a virtually RFRS group of type $\mathtt{FP}_n(\mathbb{Q})$ virtually algebraically fibres with kernel of type $\mathtt{FP}_n(\Q)$ if and only if $b_p^{(2)}(G)$ vanishes for all $p \leqslant n$ \cite[Conjecture 8]{KielakOber20}. The main result of this paper confirms Kielak's conjecture, and gives another characterisation of RFRS groups virtually fibring with kernel of type $\FP_n(\Q)$. We also note that the equivalence of (2) and (3) in the following theorem generalises \cite[Corollary 1.5]{JaikinZapirain2020THEUO}, where Jaikin-Zapirain proves the $n = 1$ case.

\begin{manualtheorem}{A}\label{thm:A}
Let $G$ be a virtually RFRS group of type $\mathtt{FP}_n(\Q)$. Then the following are equivalent:
    \begin{enumerate}[label = (\arabic*)]
        \item\label{item:b2vanish} $b_p^{(2)}(G) = 0$ for all $p \leqslant n$;
        \item\label{item:FPn} there is a finite-index subgroup $H \leqslant G$ and a surjection $\varphi \colon H \rightarrow \Z$ such that $\ker \varphi$ is of type $\FP_n(\Q)$;
        \item\label{item:finiteBetti} there is a finite-index subgroup $H' \leqslant G$ and a surjection $\varphi' \colon H' \rightarrow \Z$ such that $b_p(\ker \varphi') < \infty$ for all $p \leqslant n$.
    \end{enumerate}
\end{manualtheorem}

It should be emphasized that if $\ker \varphi'$ has finite Betti numbers in dimensions $\leqslant n$, it is not necessarily the case that $\ker \varphi'$ is of type $\FP_n(\Q)$. We prove a more general theorem that treats algebraic fibring with kernels of type $\FP_n(\mathbb F)$ for any skew-field $\mathbb F$, from which we obtain \cref{thm:A} as a special case.  Before stating the result, we give some background. If $\mathbb{F}$ is a skew-field and $G$ is a locally indicable group, then under certain conditions the group ring $\mathbb{F}G$ embeds into a skew-field $\mathcal{D}_{\mathbb{F}G}$ called the \textit{Hughes-free division ring} of $\mathbb{F}G$ (see \cref{def:HfreeDiv}). If it exists, $\mathcal{D}_{\mathbb{F}G}$ is unique up to $\mathbb{F}G$-algebra isomorphism \cite{HughesDivRings1970} and the \textit{$\mathcal{D}_{\mathbb{F}G}$-homology} of $G$ in dimension $p$ is defined to be $H_p^{\DF{G}}(G) := H_p(G; \DF{G})$. The $p$th \textit{$\mathcal{D}_{\mathbb{F}G}$-Betti number} is defined to be
\[
    b_p^{\mathcal{D}_{\mathbb{F}G}}(G) := \dim_{\mathcal{D}_{\mathbb{F}G}} H_p^{\mathcal{D}_{\mathbb{F}G}} (G).
\]

In \cite[Corollary 1.3]{JaikinZapirain2020THEUO}, Jaikin-Zapirain proves that if $G$ is finitely generated and RFRS, then $\mathcal{D}_{\mathbb{F}G}$ exists for any skew-field $\mathbb{F}$ and, if $\mathbb{F} = \mathbb{Q}$, it is isomorphic to the Linnell skew-field of $G$. For the purposes of this paper, it will not be necessary to know how the Linnell skew-field is defined, but that it can be used to define the $\ell^2$-homology and $\ell^2$-Betti numbers of a group $G$ (see \cref{def:l2b}). Importantly for us, when $G$ is finitely generated and RFRS, we have $b_p^{\mathcal{D}_{\Q G}} (G) = b_p^{(2)}(G)$ for all $p$. We prove the following theorem, which reduces to \cref{thm:A} in the case $\mathbb F = \Q$.

\begin{manualtheorem}{B}\label{thm:B}
    Let $\mathbb{F}$ be a skew-field and let $G$ be a virtually RFRS group of type $\FP_n(\mathbb F)$. Then the following are equivalent:
    \begin{enumerate}
        \item $b_p^{\DF{G}} = 0$ for all $p \leqslant n$;
        \item there is a finite-index subgroup $H \leqslant G$ and a surjection $\varphi \colon H \rightarrow \Z$ such that $\ker \varphi$ is of type $\FP_n(\mathbb F)$;
        \item there is a finite index subgroup $H' \leqslant G$ and a surjection $\varphi' \colon H' \rightarrow \Z$ such that $b_p(\ker \varphi'; \mathbb F) < \infty$ for all $p \leqslant n$.
    \end{enumerate}
\end{manualtheorem}

We highlight the following corollary, which implies, in particular, that if $\mathbb F$ and $\mathbb F'$ are skew-fields of the same characteristic, then a RFRS group $G$ is $\DF{G}$-acyclic in dimension $\leqslant n$ if and only if it is $\mathcal D_{\mathbb F'G}$-acyclic in dimensions $\leqslant n$. Moreover, it also implies that if $G$ is $\mathcal D_{\mathbb F_p G}$ acyclic in dimensions $\leqslant n$ for some prime $p$, then it is also $\ell^2$-acyclic in dimensions $\leqslant n$.

\begin{manualcor}{C}[\cref{cor:charac}]
    Let $G$ be a virtually RFRS group and let $n \in \N$.
    \begin{enumerate}
        \item If $\mathbb F$ and $\mathbb F'$ are skew-fields of the same characteristic, then $G$ virtually algebraically fibres with kernel of type $\FP_n(\mathbb F)$ if and only if it virtually algebraically fibres with kernel of type $\FP_n(\mathbb F')$.
        \item If $p$ is a prime such that $G$ virtually algebraically fibres with kernel of type $\FP_n(\mathbb F_p)$, then it virtually fibres with kernel of type $\FP_n(\Q)$.
    \end{enumerate}
\end{manualcor}

The final section of the paper is devoted to some applications of the main theorems. An interesting question is to determine general conditions under which amenable groups are elementary amenable. There are many examples of amenable groups that are not elementary amenable, for instance, Grigorchuk's group of intermediate growth, but it is not known whether there are examples of amenable groups of finite cohomological dimension that are not elementary amenable. Moreover, elementary amenable groups of finite cohomological dimension are virtually solvable by \cite[Lemma 2]{Hillman91} and \cite[Corollary 1]{HillmanLinnell}. This leads us to the following question, which first appeared in the work of Degrijse.

\begin{q}[Degrijse \cite{degrijse2016amenable}]
Are amenable groups of finite cohomological dimension over $\Z$ virtually solvable?
\end{q}

We obtain the following as an application of \cref{thm:A}, which provides evidence of a positive answer for virtually RFRS groups.

\begin{manualtheorem}{D}[\cref{thm:amRFRSelemAm}]\label{thm:C}
    If $G$ is a virtually (amenable RFRS group of type $\mathtt{FP}(\Q)$), then $G$ is virtually Abelian.
\end{manualtheorem}

We think of this result as being in the same vein as the well-known fact that nilpotent RFRS groups must be virtually Abelian (see, e.g.~\cite[Proposition 2.5]{KoberdaSuciu_RFRp}). As a corollary, we confirm the following conjecture of Baer for virtually RFRS groups of type $\mathtt{FP}(\Q)$. Note that this class of groups includes all virtually compact special groups.

\begin{conj}
    If $G$ is a group such that the group ring $\Z G$ is Noetherian, then $G$ is polycyclic-by-finite.
\end{conj}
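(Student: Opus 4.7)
The plan is to reduce to \cref{thm:amRFRSelemAm} by showing $G$ is virtually amenable. Since Noetherianness of $\Z G$ descends to finite-index subgroups (if $[G:H]<\infty$ then $\Z G$ is a finitely generated free right $\Z H$-module, and faithful flatness transfers the ascending chain condition from $\Z G$ back to $\Z H$), and since RFRS passes to subgroups, I would first replace $G$ by a finite-index RFRS subgroup. The task then reduces to showing that an RFRS group of type $\FP(\Q)$ with Noetherian integral group ring is amenable.

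The first observation is that $\Z G$ Noetherian precludes a non-abelian free subgroup. If $F_2 \leqslant G$, then the commutator subgroup $F_2' \leqslant G$ is free of infinite rank, so the augmentation ideal of $\Z F_2'$ is not finitely generated and $\Z F_2'$ fails to be Noetherian. But $\Z G$ is faithfully flat over $\Z F_2'$ (being free over $\Z F_2'$ on a set of coset representatives), so faithful flatness descent would propagate Noetherianness from $\Z G$ to $\Z F_2'$, a contradiction.

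With $G$ RFRS of type $\FP(\Q)$ and containing no non-abelian free subgroup, I would try to deduce amenability by iterating \cref{thm:b2rfrs}. Every infinite RFRS group virtually surjects onto $\Z$; assuming vanishing of all $\ell^2$-Betti numbers, \cref{thm:b2rfrs} upgrades such a surjection to a virtual fibration with $\FP(\Q)$ kernel. The kernel is itself a subgroup, hence RFRS, and still contains no $F_2$. A descending cohomological-dimension invariant forces termination of the iteration in a virtually abelian group, whence $G$ is virtually solvable and in particular amenable; then \cref{thm:amRFRSelemAm} concludes the argument.

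The main obstacle is justifying the hypothesis of \cref{thm:b2rfrs} at each stage, namely the vanishing of the $\ell^2$-Betti numbers. Vanishing is a consequence of amenability by Cheeger--Gromov, but amenability is exactly what one is trying to prove, creating an apparent circularity. Breaking it probably requires extracting the vanishing directly from the Noetherian hypothesis --- for instance via L\"uck approximation along the RFRS tower together with a Noetherian-based control on the growth of rational Betti numbers of the finite-index subgroups --- or else finding a more structural route from ``RFRS plus $\FP(\Q)$ plus no $F_2$'' directly to virtual polycyclicity that sidesteps the $\ell^2$-machinery.
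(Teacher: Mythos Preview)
Your reduction to the RFRS case and the overall strategy of establishing amenability in order to invoke \cref{thm:amRFRSelemAm} are both correct, and you have honestly identified the gap: you cannot get the $\ell^2$-vanishing needed to start the iteration without already knowing amenability. The ``no $F_2$'' argument is valid but too weak to close this gap --- absence of non-abelian free subgroups does not by itself force $\ell^2$-acyclicity, and your proposed fallbacks (L\"uck approximation with Noetherian Betti-number control, or a direct structural argument) are speculative.

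The paper breaks the circularity by a completely different and much shorter route to amenability. Since $G$ is RFRS, $\Z G$ embeds in the Linnell skew-field $\mathcal{D}(G)$, so $\Z G$ is a domain. A Noetherian domain is automatically an Ore domain, hence $\Q G$ is an Ore domain as well. Kielak's appendix to \cite{BartholdiKielakApp} then gives that $G$ is amenable. With amenability in hand, \cref{thm:amRFRSelemAm} finishes the proof. So the missing idea in your approach is precisely the chain
\[
\text{Noetherian domain} \ \Longrightarrow \ \text{Ore domain} \ \Longrightarrow \ \text{amenable},
\]
the last step being the nontrivial input from \cite{BartholdiKielakApp}. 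Once you know this, there is no need to iterate \cref{thm:b2rfrs} yourself; \cref{thm:amRFRSelemAm} already packages that iteration.
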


Hall showed that polycyclic-by-finite groups have Noetherian group rings \cite[Theorem 4]{Hall59}, but it is still unknown whether the converse holds. Some progress was made recently by P.~Kropholler and Lorensen, who showed that if $RG$ is right Noetherian and $R$ is a domain, then $G$ is amenable and all of its subgroups are finitely generated \cite[Corollary B]{KrophollerLorensen19}. This result provides evidence for the conjecture, as the only known amenable groups in which every subgroup is finitely generated are polycyclic-by-finite. We obtain the following as a consequence of \cref{thm:C} and Kielak's appendix to \cite{BartholdiKielakApp}.

\begin{manualcor}{E}[\cref{cor:baer}]
    Let $G$ be a virtually RFRS group of type $\mathtt{FP}(\Q)$ such that $\Z G$ is Noetherian. Then $G$ is virtually Abelian.
\end{manualcor}

Finally, we mention that Llosa Isenrich, Martelli, and Py remarked in  \cite{IsenrichPyMartelli_F3notF4} that an easy consequence of \cref{thm:A}, Agol's Theorem \cite{AgolHaken}, and work of Agol and Bergeron--Haglund--Wise \cite{BHW_2011} is the existence of hyperbolic groups containing type $\FP_{n-1}(\Q)$ subgroups that are not of type $\FP_n(\Q)$ for all $n \in \N$. We also remark that, by the same argument, such lattices in $\PO(2n+1,1)$ algebraically fibre with kernel of type $\FP(\Q)$.

\begin{prop}[{\cite[Proposition 19]{IsenrichPyMartelli_F3notF4}}]\label{prop:LMP}
    Let $\Gamma \in \PO(m,1)$ be a hyperbolic arithmetic lattice of the simplest type. If $m = 2n$, then $\Gamma$ virtually fibres with kernel of type $\FP_{n-1}(\Q)$ but not of type $\FP_n(\Q)$. If $m = 2n+1$, then $\Gamma$ virtually fibres with kernel of type $\FP(\Q)$.
\end{prop}

Our methods do not allow us to say anything about algebraic fibring with kernels of type $\FP_n(\mathbb F_p)$ for $p$ prime (nor about algebraic fibring with stronger finiteness properties of the kernel) since the $\mathcal D_{\mathbb F_p G}$-Betti numbers of simplest type hyperbolic arithmetic lattices are not known to vanish. In a subsequent paper \cite{IsenrichPy2022}, Llosa Isenrich and Py showed that there are hyperbolic arithmetic lattices of the simplest type in $\PU(n,1)$ that virtually fibre with kernel of type $\F_{n-1}$ but not of type $\FP_n(\Q)$, answering a question of Brady about the existence of subgroups of hyperbolic groups with exotic finiteness properties.

\subsection*{Structure of the paper} 

In \cref{sec:prelims} we introduce some of the main tools and objects that will be used throughout the paper. In particular, we define finiteness properties of groups, Hughes-free division rings, RFRS groups, and Ore localisation.

\cref{sec:valuations} recalls what we will need from the theory of valuations on free resolutions developed by Bieri and Renz in \cite{BieriRenzValutations}. The results in \cite{BieriRenzValutations} are stated for free resolutions over group rings with coefficients in $\Z$, however we will need them for coefficients in an arbitrary associative, unital ring $R$. There is no essential dependence on the ring, so our proofs are similar to those of Bieri and Renz after replacing $\Z$ with $R$.

In \cref{sec:horo}, we introduce the complex of horochains associated to a free resolution equipped with a valuation.

\cref{sec:SigInv} begins with the definition of the higher $\Sigma$-invariants $\Sigma_R^n(G; M)$ for a group $G$, a unital, associative ring $R$, and an $RG$-module $M$. Again, these were introduced in the case $R = \Z$ in \cite{BieriRenzValutations}, and reduce to the usual Bieri--Neumann--Strebel invariant when $n = 1$, $R = \Z$, and $M = \Z$ is the trivial $\Z G$-module. The rest of the section is devoted to the proof of \cref{thm:Main}, which gives equivalent characterisations of the invariants $\Sigma_R^n(G; M)$. The most important characterisation for our purposes is the following: $[\chi] \in \Sigma_R^n(G;M)$ if and only if $\Tor_i^{RG}(\widehat{RG}^\chi, M) = 0$ for all $i \leqslant n$, where $\widehat{RG}^\chi$ is the Novikov ring. When $n = 1$, this result is Sikorav's theorem \cite{SikoravThese} and is a key ingredient in Kielak's proof of \cref{thm:kielak}. The proof follows arguments given in \cite{BieriRenzValutations} and Schweitzer's appendix to \cite{BieriDeficiency}. \cref{thm:Main} is the main technical result that will be used in the proof of \cref{thm:agrarianMain}. 

In \cref{sec:homology} we introduce $\mathcal{D}_{\mathbb{F}G}$-homology and prove properties of $\mathcal{D}_{\mathbb{F}G}$-Betti numbers analogous to those of $\ell^2$-Betti numbers. Namely, we prove that 
\[
    [G : H] \cdot b_p^{\mathcal{D}_{\mathbb{F}G}}(G) = b_p^{\mathcal{D}_{\mathbb{F}H}}(H)
\]
(\cref{lem:JBscales}) whenever $\mathcal{D}_{\mathbb{F}G}$ exists and $H$ is a finite index subgroup of $G$. In \cref{thm:JBSES}, we show that if $b_p^{\mathcal{D}_{\mathbb{F}K}}(K) < \infty$ and $G$ fits into a short exact sequence $1 \rightarrow K \rightarrow G \rightarrow \Z \rightarrow 1$, then $b_p^{\mathcal{D}_{\mathbb{F}G}}(G) = 0$. This should be thought of as an analogue of \cite[Theorem 7.2]{Luck02} for $\mathcal{D}_{\mathbb{F}G}$-Betti numbers. We then prove the main result, \cref{thm:agrarianMain}, and obtain \cref{thm:b2rfrs} as a special case. In \cref{thm:finiteBetti}, we show that virtually fibring with kernel of type $\FP_n(\Q)$ is equivalent to having a virtual map to $\Z$ whose kernel has finite Betti numbers in dimensions $\leqslant n$.

We conclude with the proofs of \cref{thm:amRFRSelemAm} and \cref{cor:baer} in \cref{sec:app}, and mention related work of Llosa Isenrich, Martelli, and Py.

\subsection*{Acknowledgements.} The author would like to thank Dawid Kielak for numerous helpful conversations and comments on this paper, Peter Kropholler for a helpful correspondence, and Sam Hughes for pointing out the application of \cref{thm:amRFRSelemAm} to \cref{cor:baer}. The author also thanks Sami Douba for pointing out that a polycyclic RFRS group must in fact be Abelian.

This work has received funding from the European Research Council (ERC) under the European Union's Horizon 2020 research and innovation programme (Grant agreement No. 850930).

\section{Preliminaries} \label{sec:prelims}

\begin{rem*}
In the sequel, all rings will be associative and unital with $1 \neq 0$.
\end{rem*}

\subsection{Finiteness properties}

\begin{defn}[type $\mathtt{FP}_n$]
Let $R$ be a ring and $M$ be a left $R$-module. We say that $M$ is of \textit{type $\mathtt{FP}_n$}, and write $M \in \mathtt{FP}_n$, if $M$ has a projective resolution
\[
\cdots \rightarrow P_{n+1} \rightarrow P_n \rightarrow \cdots \rightarrow P_1 \rightarrow P_0 \rightarrow M \rightarrow 0
\]
by left $R$-modules, where $P_j$ is finitely generated for $j \leqslant n$. If we want to specify the ring, we say that $M$ is of \textit{type $\mathtt{FP}_n$ over $R$} and write $M \in \mathtt{FP}_n(R)$.  If $P_j = 0$ for $j > n$, then we write $M \in \mathtt{FP}(R)$.

A group $G$ is of \textit{type $\mathtt{FP}_n$ over $R$} if the trivial $RG$-module $R$ is of type $\mathtt{FP}_n(RG)$; in this case we write $G \in \mathtt{FP}_n(R)$. Similarly, if the trivial $RG$-module $R$ is of type $\mathtt{FP}(RG)$, then we write $G \in \mathtt{FP}(R)$.
\end{defn}

We will often use the fact that an $R$-module $M$ is of type $\mathtt{FP}_n$ if and only if there is a free resolution $\cdots \rightarrow F_{n+1} \rightarrow F_n \rightarrow \cdots \rightarrow F_0 \rightarrow M \rightarrow 0$ with $F_j$ finitely generated for $j \leqslant n$ \cite[Proposition VIII.4.3]{BrownGroupCohomology}. Note that the analogous fact does not hold for type $\mathtt{FP}$.

The following definition will not be needed until \cref{sec:app}.

\begin{defn}[cohomological dimension]
A resolution $\cdots \rightarrow P_1 \rightarrow P_0 \rightarrow M \rightarrow 0$ of an $R$-module $M$ has \textit{length} $n$ if $P_n \neq 0$ and $P_m = 0$ for $m > n$. A group $G$ has \textit{cohomological dimension $n$ over $R$} if $n$ is the shortest length of any projective resolution of the trivial $RG$-module $R$. In this case we will write $\cd_R (G) = n$. If $R$ has no finite-length projective resolution, then $\cd_R(G) = \infty$.
\end{defn}

Note that $G \in \mathtt{FP}(R)$ implies that $G$ has finite cohomological dimension over $R$, but not conversely.

\subsection{Hughes-free division rings}

We are following Jaikin-Zapirain's exposition of this material; in particular, \cref{def:HfreeDiv,def:crossedprod} are taken from \cite{JaikinZapirain2020THEUO}.

\begin{defn}[Hughes-free division rings] \label{def:HfreeDiv}
Let $\mathbb{F}$ and $\mathcal{D}$ be skew-fields, let $G$ be a locally indicable group, and let $\varphi \colon \mathbb{F} G \rightarrow \mathcal{D}$ be a ring homomorphism. Then the pair $(\mathcal{D}, \varphi)$ is \textit{Hughes-free} if
\begin{enumerate}[label=(\arabic*)]
    \item\label{item:epic} $\mathcal{D}$ is the skew-field generated by $\varphi(\mathbb{F} G)$, i.e., if $\mathcal E \subseteq \mathcal D$ is a skew-field such that $\varphi(\mathbb F G) \subseteq \mathcal E$, then $\mathcal E = \mathcal D$;
    \item\label{item:direct} for every nontrivial finitely generated subgroup $H \leqslant G$, every normal subgroup $N \triangleleft H$ such that $H/N \cong \Z$, and every set of elements $h_1, \dots, h_n \in H$  lying in pairwise distinct cosets of $N$, the sum
    \[
    \langle\varphi(\mathbb{F}N)\rangle \cdot \varphi(h_1) + \cdots + \langle\varphi(\mathbb{F}N)\rangle \cdot \varphi(h_n)
    \]
    is direct, where $\langle\varphi(\mathbb{F}N)\rangle$ is the sub-skew-field of $\mathcal{D}$ generated by $\varphi(\mathbb{F}N)$.
\end{enumerate}
\end{defn}

Hughes showed that if such a pair $(\mathcal{D},\varphi)$ exists, then $\mathcal{D}$ is unique up to $\mathbb{F}G$-algebra isomorphism \cite{HughesDivRings1970}. Thus, we denote $\mathcal{D}$ by $\mathcal{D}_{\mathbb{F}G}$. Let $H \leqslant G$ be a subgroup and suppose that $\mathcal{D}_{\mathbb{F}G}$ exists. Then $\mathcal{D}_{\mathbb{F}H}$ exists as well and is equal to $\langle \varphi(\mathbb{F}H) \rangle \subseteq \mathcal{D}_{\mathbb{F}G}$. Hence, we will view $\mathcal{D}_{\mathbb{F}H}$ as a subset of $\mathcal{D}_{\mathbb{F}G}$ whenever $H$ is a subgroup of $G$. Moreover, Gr\"ater showed that $\mathcal{D}_{\mathbb{F}G}$ is \textit{strongly Hughes-free} whenever it exists \cite[Corollary 8.3]{Grater20}, which is to say that condition (2) above can be replaced with the following:
\begin{enumerate}[label = {($2^\prime$)}]
    \item\label{item:2prime} for every nontrivial subgroup $H \leqslant G$, every normal subgroup $N \triangleleft H$, and every set of elements $h_1, \dots, h_n \in H$  lying in pairwise distinct cosets of $N$, the sum
    \[
    \langle\varphi(\mathbb{F}N)\rangle \cdot \varphi(h_1) + \cdots + \langle\varphi(\mathbb{F}N)\rangle \cdot \varphi(h_n)
    \]
    is direct.
\end{enumerate}

\begin{defn}[Crossed products] \label{def:crossedprod}
A ring $R$ is \textit{$G$-graded} if its underlying Abelian group is isomorphic to a direct sum $\bigoplus_{g \in G} R_g$ of Abelian groups $R_g$ and $R_g R_h \subseteq R_{gh}$ for all $g,h \in G$. If $R_g$ contains a unit for each $g \in G$, then we say that $R$ is a \textit{crossed product} of $R_e$ and $G$, and we write $R = R_e * G$.
\end{defn}

Strong Hughes-freeness implies the following useful properties, which are stated in \cite{JaikinZapirain2020THEUO}.

\begin{prop}\label{prop:twistedNormalSkew}
Let $G$ be a locally indicable group such that $\mathcal{D}_{\mathbb{F}G}$ exists, let $\varphi$ be as in \cref{def:HfreeDiv}, and $N \triangleleft G$ be a normal subgroup. If $R$ is the subring of $\mathcal{D}_{\mathbb{F}G}$ generated by $\mathcal{D}_{\mathbb{F}N}$ and $\varphi(\mathbb{F}G)$, then
\begin{enumerate}[label = (\arabic*)]
    \item\label{item:subring} $R \cong \mathcal{D}_{\mathbb{F}N} * (G/N)$;
    \item\label{item:twistedFiniteIndex} if $[G:N] < \infty$, then $\mathcal{D}_{\mathbb{F}G} = R$.
\end{enumerate}
\end{prop}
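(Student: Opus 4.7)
The plan is to use the strongly Hughes-free property (2') together with the epic condition (1) from \cref{def:HfreeDiv}. I would fix a transversal $\{g_i\}$ of $N$ in $G$ with $g_1 = e$. Applying (2') to $H = G$, the normal subgroup $N$, and the $g_i$ shows that the sum $S := \sum_i \mathcal{D}_{\mathbb{F}N}\varphi(g_i)$ is direct inside $\mathcal{D}_{\mathbb{F}G}$. The goal is then to show $R = S$ and that this decomposition realises the crossed product structure of (1).

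The key technical step, and the main obstacle, is verifying that conjugation by $\varphi(g_i)$ preserves $\mathcal{D}_{\mathbb{F}N}$. Normality of $N$ gives $\varphi(g_i)\varphi(\mathbb{F}N)\varphi(g_i)^{-1} = \varphi(\mathbb{F}N)$, so $\varphi(g_i)\mathcal{D}_{\mathbb{F}N}\varphi(g_i)^{-1}$ is a sub-skew-field of $\mathcal{D}_{\mathbb{F}G}$ containing $\varphi(\mathbb{F}N)$. Since $\mathcal{D}_{\mathbb{F}N}$ is the skew-field generated by $\varphi(\mathbb{F}N)$ inside $\mathcal{D}_{\mathbb{F}G}$ (as noted in the paper after \cref{def:HfreeDiv}), this sub-skew-field must contain $\mathcal{D}_{\mathbb{F}N}$; the reverse inclusion follows by symmetry, using $\varphi(g_i)^{-1}$. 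With this in hand, $\varphi(g_i)\mathcal{D}_{\mathbb{F}N} = \mathcal{D}_{\mathbb{F}N}\varphi(g_i)$ as subsets of $\mathcal{D}_{\mathbb{F}G}$, and so $S$ is closed under multiplication: any product $\mathcal{D}_{\mathbb{F}N}\varphi(g_i) \cdot \mathcal{D}_{\mathbb{F}N}\varphi(g_j)$ collapses to $\mathcal{D}_{\mathbb{F}N}\varphi(g_ig_j) = \mathcal{D}_{\mathbb{F}N}\varphi(g_k)$, where $g_k$ is the transversal representative of the coset $g_ig_jN$. Since $S$ also contains $\mathcal{D}_{\mathbb{F}N}$ and all $\varphi(g)$ (every $g \in G$ lies in some $g_iN$), we get $R = S$, and setting $R_{gN} := \mathcal{D}_{\mathbb{F}N}\varphi(g_i)$ for $g_i \in gN$ yields a $G/N$-grading with each piece containing the unit $\varphi(g_i)$, proving (1).

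For (2), if $[G:N] < \infty$ then $R$ is finite-dimensional as a right $\mathcal{D}_{\mathbb{F}N}$-vector space. Since $R$ is a subring of the skew-field $\mathcal{D}_{\mathbb{F}G}$ it is a domain, so for any nonzero $r \in R$, left multiplication by $r$ is an injective $\mathcal{D}_{\mathbb{F}N}$-linear endomorphism of the finite-dimensional space $R$, hence surjective. This produces a two-sided inverse for $r$ inside $R$, so $R$ is itself a skew-field sitting between $\varphi(\mathbb{F}G)$ and $\mathcal{D}_{\mathbb{F}G}$. The epic condition (1) then forces $R = \mathcal{D}_{\mathbb{F}G}$, finishing the proof.
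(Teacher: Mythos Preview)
Your proof is correct and follows essentially the same strategy as the paper: fix a transversal, show that $R = \bigoplus_i \mathcal{D}_{\mathbb{F}N}\varphi(g_i)$ using strong Hughes-freeness for directness, and for (2) use that a finite-dimensional domain over a skew-field is a skew-field together with the epic property. The only noticeable difference is in how the conjugation invariance $\varphi(g_i)\mathcal{D}_{\mathbb{F}N}\varphi(g_i)^{-1} = \mathcal{D}_{\mathbb{F}N}$ is established: the paper verifies it by an element-level computation on $\varphi(a)$ and $\varphi(a)^{-1}$ for $a \in \mathbb{F}N$ and appeals to $\mathcal{D}_{\mathbb{F}N}$ being the division closure, whereas you argue more conceptually via the minimality of $\mathcal{D}_{\mathbb{F}N}$ among sub-skew-fields containing $\varphi(\mathbb{F}N)$---but these are two phrasings of the same idea.
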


\begin{proof}
    Starting with \ref{item:subring}, let $\{t_i\}_{i \in I}$ be a transversal for $N$ in $G$. We claim that every element of $R$ can be written as a finite sum $\sum_i \alpha_i \varphi(t_i)$, where $\alpha_i \in \mathcal{D}_{\mathbb FN}$ for each $i \in I$. This will conclude the proof of \ref{item:subring} by strong Hughes-freeness. Since every element of $G$ is of the  form $n t_i$ for some $n \in N$ and $i \in I$, it is enough to show that $\varphi(t_i) a \varphi(t_i)\inv \in \mathcal D_{\mathbb F N}$ for every $i \in I$ and every $a \in \mathcal D_{\mathbb F N}$. Notice that $\mathcal D_{\mathbb F N} = \bigcup_{j\geqslant 0} \mathcal D_j$ where $\mathcal D_0 = \varphi(\mathbb FN)$ and $\mathcal D_{j+1}$ is generated by $\mathcal D_j$ and the set $\{x\inv : x \in \mathcal D_j \smallsetminus \{0\}\}$. Because $N$ is normal in $G$, it is clear that $\varphi(\mathbb FN)$ is closed under conjugation by $\varphi(G)$. By induction, we see that each $\mathcal D_j$ is closed under conjugation by $\varphi(G)$.

    \smallskip

    To prove \ref{item:twistedFiniteIndex}, we recall the fact that a finite-dimensional algebra over a skew-field with no zero-divisors is a skew-field. Hence, if $[G:N] < \infty$, then $R \cong \mathcal{D}_{\mathbb{F}N} * (G/N)$ is finite-dimensional over $\mathcal{D}_{\mathbb{F}N}$ and is therefore a skew-field. But $\mathcal{D}_{\mathbb{F}G}$ is the smallest skew-field containing $\varphi(\mathbb{F}G)$, so $\mathcal{D}_{\mathbb{F}G} = R$. \qedhere
\end{proof}


\subsection{RFRS groups}

Residually finite rationally solvable (RFRS) groups were defined by Agol in \cite{AgolCritVirtFib} in order to show that certain hyperbolic $3$-manifolds virtually fibre over the circle. Let $G$ be a group and let $G^\mathsf{ab} := G/[G,G]$ be its Abelianisation. Since $G^\mathsf{ab}$ is Abelian, it is canonically a $\Z$-module, so we can form the tensor product $\Q \otimes_\Z G^\mathsf{ab}$, and there is a group homomorphism $G \rightarrow \Q \otimes_\Z G^{\mathsf{ab}}$ sending $g \in G$ to $1 \otimes g[G,G]$.

\begin{defn}\label{def:RFRS}
A group $G$ is \textit{RFRS} if 
\begin{enumerate}[label = (\arabic*)]
    \item there is a chain $G = G_0 \geqslant G_1 \geqslant G_2 \geqslant \cdots$ of finite index normal subgroups of $G$ such that $\bigcap_{i=0}^\infty G_i = \{1\}$
    \item $\ker(G_i \rightarrow \Q \otimes_\Z G_i^{\mathsf{ab}}) \leqslant G_{i+1}$ for every $i \geqslant 0$.
\end{enumerate}
\end{defn}

The following fact that will be used in the proof of \cref{thm:agrarianMain}. 

\begin{prop}\label{prop:RFRStoQ}
    Let $G$ be a finitely generated RFRS group. Then
    \begin{enumerate}[label=(\arabic*)]
        \item\label{item:resplyZ} $G$ is residually poly-$\Z$;
        \item\label{item:v_retracts} $G$ virtually retracts onto all of its finitely generated Abelian subgroups;
        \item\label{item:loc_ind} $G$ is locally indicable.
    \end{enumerate}
\end{prop}
\begin{proof}
    \cref{item:resplyZ} is proven in \cite[Proposition 4.4]{JaikinZapirain2020THEUO}. We now prove \ref{item:v_retracts}. By \cite[Proposition 1.5]{Minasyan_VRprops}, it suffices to show that $G$ virtually retracts onto its cyclic subgroups. Let $G = G_0 \geqslant G_1 \geqslant \cdots$ be a RFRS chain and let $\langle a \rangle$ be a cyclic subgroup of $G$. Let $i$ be the unique integer such that $a \in G_i \smallsetminus G_{i+1}$. It follows that $a$ is not in the kernel of the free Abelianisation map $G_i \rightarrow G_i^{\mathsf{fab}}$. Let $\overline{a}$ denote the image of $a$ in $G_i^\mathsf{fab}$. There is a finite index subgroup $H' \leqslant G_i^\mathsf{fab}$ and a retraction $H' \rightarrow \langle \overline{a} \rangle$. Let $H \leqslant G_i$ be the preimage of $H'$. The composition $H \rightarrow H' \rightarrow \langle \overline{a} \rangle \rightarrow \langle a \rangle$ is the desired retraction. \cref{item:loc_ind} is an immediate consequence of \ref{item:resplyZ}. \qedhere
\end{proof}


\subsection{Ore localisation}

Ore localisation is an analogue of usual localisation for noncommutative rings. Let $R$ be a ring and let $S$ be its set of non-zero-divisors. Then $R$ satisfies the \textit{Ore condition} if for every $r \in R$ and $s \in S$ there  are elements $p,p' \in R$ and $q,q' \in S$ such that 
\[
    qr = ps \ \textnormal{and} \ rq' = sq'.
\]
If  $S = R \setminus \{0\}$ and $R$ satisfies the Ore condition, then it is called an \textit{Ore domain}, and we can form its \textit{Ore localisation} $\Ore(R)$ as follows. Define an equivalence relation $\sim_R$ on $R \times S$ by declaring that $(r,s) \sim_R (r',s')$ if and only if there are elements $a,b \in S$ such that
\[
    ra = r'b \ \textnormal{and} \ sa = s'b.
\]
The equivalence class of $(r,s)$ under $\sim_R$ is denoted $r/s$ and called a \textit{right fraction}. Then $\Ore(R)$ is defined to be the set of right fractions. We can similarly define an equivalence relation $\sim_L$ and define $\Ore(R)$ as a set of left fractions. The Ore condition ensures that these two constructions are isomorphic and indicates how to convert right fractions into left fractions and vice versa. For a detailed construction of $\Ore(R)$ and the definition of addition and multiplication making this set into a ring, we refer the reader to Section 4.4 of Passman's book \cite{PassmanGrpRng}.

The facts about Ore localisation that we will use are summarised in the following proposition.

\begin{prop}\label{prop:OreLoc}
Let $R$ be an Ore domain. Then
\begin{enumerate}[label = (\arabic*)]
    \item $\Ore(R)$ is a skew-field;
    \item for every $r \in R$, we have $r / 1 = 1 \backslash r$ and the map $R \rightarrow \Ore(R), r \mapsto r/1 = 1 \backslash r$ is an injective ring homomorphism;
    \item \cite[Proposition 2.2(2)]{JaikinZapirain2020THEUO} if $G$ is a group and $\mathbb{F}$ is a skew-field such that $\mathcal{D}_{\mathbb{F}G}$ exists and $K$ is a normal subgroup such that $G / K \cong \Z$, then  $\mathcal{D}_{\mathbb{F}G} \cong \Ore(\mathcal{D}_{\mathbb{F}K} * (G/K))$.
\end{enumerate}
\end{prop}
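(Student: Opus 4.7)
The proposition collects three standard facts about Ore localisation; the plan is to sketch the arguments rather than re-derive the full construction of $\Ore(R)$.

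For \textnormal{(1)}, once the ring structure on $\Ore(R)$ is in place (following Passman's construction in \cite[\S 4.4]{PassmanGrpRng}), each nonzero fraction $r/s$ has $s/r$ as a two-sided inverse, so $\Ore(R)$ is a skew-field. For \textnormal{(2)}, the equality $r/1 = 1\backslash r$ is forced by the definitions of $\sim_R$ and $\sim_L$ with $a=b=1$; and if $r/1 = r'/1$, then by definition there exist $a,b \in S$ with $ra = r'b$ and $a = b$, so cancellation in the domain $R$ gives $r = r'$, proving injectivity of $r \mapsto r/1$. Additivity and multiplicativity follow directly from the definition of the operations on $\Ore(R)$.

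The heart of the proposition is \textnormal{(3)}, which I would prove in three steps. \emph{Step 1:} Since $K$ is normal in $G$, \cref{prop:twistedNormalSkew}\ref{item:subring} identifies $R' := \mathcal{D}_{\mathbb{F}K} * (G/K)$ with the subring of $\mathcal{D}_{\mathbb{F}G}$ generated by $\mathcal{D}_{\mathbb{F}K}$ and $\varphi(\mathbb{F}G)$; in particular $R'$ is a domain. \emph{Step 2:} Lifting a generator of $G/K \cong \Z$ to some $t \in G$ exhibits $R'$ as a twisted Laurent polynomial ring $\mathcal{D}_{\mathbb{F}K}[t^{\pm 1}; \sigma, \tau]$, where $\sigma$ is the automorphism of $\mathcal{D}_{\mathbb{F}K}$ induced by conjugation by $t$ and $\tau$ is a $2$-cocycle. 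This ring carries a $t$-degree function and admits a Euclidean-style division algorithm, making it a principal left and right ideal domain; in particular it is Noetherian, and hence Ore by Goldie's theorem. \emph{Step 3:} The inclusion $R' \hookrightarrow \mathcal{D}_{\mathbb{F}G}$ extends uniquely to a ring homomorphism $\Ore(R') \to \mathcal{D}_{\mathbb{F}G}$ via $r/s \mapsto rs\inv$, since every nonzero element of $R'$ is already invertible in $\mathcal{D}_{\mathbb{F}G}$. This map is injective because $\Ore(R')$ is a skew-field, and its image is a sub-skew-field of $\mathcal{D}_{\mathbb{F}G}$ containing $\varphi(\mathbb{F}G)$; minimality condition \ref{item:epic} in \cref{def:HfreeDiv} then forces equality.

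The principal obstacle is \emph{Step 2}: the Ore condition on $R'$ is where the real content lies. It follows cleanly once $R'$ is recognised as a skew Laurent polynomial ring, but that recognition is exactly the output of \cref{prop:twistedNormalSkew}\ref{item:subring}, which in turn rests on strong Hughes-freeness. So although part \textnormal{(3)} reads like a formality, its verification is genuinely non-trivial, which is why the proposition defers to \cite{JaikinZapirain2020THEUO} for this statement.
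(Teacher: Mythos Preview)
The paper does not give its own proof of this proposition: it is stated as a summary of known facts, with parts (1) and (2) implicitly deferred to Passman's construction \cite[\S 4.4]{PassmanGrpRng} and part (3) explicitly attributed to \cite[Proposition 2.2(2)]{JaikinZapirain2020THEUO}. So there is nothing to compare your argument against directly.

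That said, your sketch is correct and is essentially how one would fill in the details. One small remark on Step~2: since $G/K \cong \Z$ is free, $H^2(\Z, \mathcal{D}_{\mathbb{F}K}^\times) = 0$ and the $2$-cocycle $\tau$ can be normalised away; the crossed product $\mathcal{D}_{\mathbb{F}K} * \Z$ is genuinely a skew Laurent polynomial ring $\mathcal{D}_{\mathbb{F}K}[t^{\pm 1}; \sigma]$ with no twisting cocycle. This does not affect your argument (the Ore condition follows either way), but it slightly simplifies the picture. Your Step~3 is exactly right: the universal property of Ore localisation extends the inclusion, injectivity is automatic from a skew-field, and condition~\ref{item:epic} of \cref{def:HfreeDiv} forces the image to be all of $\mathcal{D}_{\mathbb{F}G}$.
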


\section{Valuations on free resolutions} \label{sec:valuations}

In this section, we introduce valuations on free resolutions over a group ring. We will be very closely following Bieri and Renz \cite{BieriRenzValutations} where the theory is developed in the case where the ring is $\Z$. Their proofs go through without change when $
\Z$ is replaced by an arbitrary ring $R$.

Let $R$ be a ring, $G$ a group, and $M$ a left $RG$-module. A \textit{free resolution} of $M$ is an exact sequence
\[
    \cdots \xrightarrow{\partial_{n+1}} F_n \xrightarrow{\partial_n} F_{n-1} \xrightarrow{\partial_{n-1}} \cdots \xrightarrow{\partial_1} F_0 \xrightarrow{\partial_0}M \rightarrow 0
\]
of left $RG$-modules, where $F_i$ is free for all $i \geqslant 0$. We will usually omit the subscripts on the boundary maps $\partial_n$ and denote the free resolution by $F_\bullet \rightarrow M \rightarrow 0$. Let $F$ be the free $RG$-module $\bigoplus_{i = 0}^\infty F_i$, and define the \textit{$n$-skeleton} of $F$ to be $F^{(n)} := \bigoplus_{i = 0}^n F_i$. The elements of $F$ are called \textit{chains}, so a chain is not necessarily an element of $F_i$ for any $i$ in our context. Fixing a basis $X_i$ for each $F_i$, we note that $X := \bigcup_{i=0}^\infty X_i$ is a basis for $F$ and $X^{(n)} := \bigcup_{i = 0}^n X_i$ is a basis for $F^{(n)}$. The resolution $F_\bullet \rightarrow M \rightarrow 0$ is \textit{admissible with respect to $X$} if $\partial x \neq 0$ for every $x \in X$. We will always assume that our free resolutions are admissible with respect to the basis we are working with. This is not a strong requirement, since if all boundary maps are nonzero, then $F$ has a  basis with respect to which $F_\bullet \rightarrow M \rightarrow 0$ is admissible; otherwise we can truncate the resolution and choose a basis to obtain an admissible resolution of finite length. We also define the \textit{support} of a chain $c \in F$ (with respect to $X$), denoted $\supp_X(c)$, as follows: every chain $c \in F$ can be written uniquely as $\sum_{g\in G, x \in X} r_{g,x} gx$, where $r_{g,x} \in R$. Then $\supp_X (c) := \{ gx : r_{g,x} \neq 0 \}$; we will usually drop the subscript $X$ when the basis is understood.

Let $\chi \colon G \rightarrow \R$ be a nontrivial \textit{character}, that is, a nonzero group homomorphism from $G$ to the additive group $\R$. This provides the elements of $G$ with a notion of height, which we now extend to the chains of $F$. Let $\R_\infty = \R \cup \{\infty\}$, where $\infty$ is an element such that $t < \infty$ for every $t \in \R$.  We construct a function $v_X \colon F \rightarrow \R_{\infty}$ via the following inductive procedure. For an element $c \in F_0$, define $v_X(c) = \inf\{ \chi(g) : gx \in \supp(c) \}$. Let $n > 0$ and assume that we have defined $v_X$ on $F_{n-1}$. For $x \in X_n$, let $v_X(x) := v_X(\partial x)$. For $c \in F_n$, set $v_X(c) = \inf\{ \chi(g) + v_X(x) : gx \in \supp(c) \}$. For an arbitrary $c \in F$, write $c = \sum_i c_i$, where $c_i \in F_i$, and define $c = \inf_i\{ v_X(c_i) \}$. We are assuming the convention $v_X(0) = \infty$, since $\supp(0) = \varnothing$. The function $v_X$ is called the \textit{valuation extending $\chi$ with respect to $X$}. It is clear from the definition that $v_X(c) = \inf\{ \chi(g) + v_X(x) : gx \in \supp(c) \}$ for any chain $c \in F$. Again, we will usually drop the $X$ in the subscript when the basis is understood.

\begin{prop} \label{PropPropsOfVal}
For the valuation $v_X = v \colon F \rightarrow \R_\infty$ defined above and for any $c, c' \in F$ and $g \in G$, we have
\begin{enumerate}[label=(\arabic*)]
    \item\label{item:sum} $v(c + c') \geqslant \min\{ v(c), v(c') \}$;
    \item\label{item:scalar} $v(c) \leqslant v(rc)$ for all $r \in R$, and $v(c) = v(rc)$ if $r$ is not a zero-divisor;
    \item\label{item:sumequal} if $v(c) \neq v(c')$, then $v(c + c') = \min\{ v(c), v(c') \}$;
    \item\label{item:groupelement} $v(g c) = \chi(g) + v(c)$;
    \item\label{item:infheight} $v(c) = \infty$ if and only if $c = 0$;
    \item\label{item:bdyincrease} if $c \in \bigoplus_{i \geqslant 1} F_i$, then  $v(\partial c) \geqslant v(c)$.
\end{enumerate}
\end{prop}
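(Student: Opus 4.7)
The plan is to verify the six assertions in an order matching their logical dependencies, exploiting the unified formula
\[
v(c) = \inf\{\chi(g) + v(x) : gx \in \supp(c)\}
\]
which holds for any chain $c$ (taking $v(x) = 0$ for $x \in X_0$, which one reads off directly from the definition applied to the basis element $1 \cdot x$). Throughout I write $c = \sum_{g,x} r_{g,x}\, gx$ in its canonical form with $r_{g,x} \in R$, and note that $\supp(c)$ is always a finite set.

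First I would dispatch \textbf{(1)}, \textbf{(2)}, and \textbf{(4)}, which follow immediately from the formula and three elementary set-theoretic observations: $\supp(c+c') \subseteq \supp(c) \cup \supp(c')$; $\supp(rc) \subseteq \supp(c)$, with equality when $r \in R$ is not a zerodivisor (since $r \cdot r_{g,x} = 0$ forces $r_{g,x} = 0$); and left multiplication by $g$ is a bijection $\supp(c) \to \supp(gc)$ which shifts $\chi$-heights by $\chi(g)$. Item \textbf{(3)} then follows from (1) by the standard trick: assuming $v(c) < v(c')$, applying (1) to $c + c'$ gives $v(c+c') \geqslant v(c)$, while applying (1) to $c = (c+c') + (-c')$ (invoking (2) with $r = -1$) gives the reverse inequality.

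For \textbf{(5)}, I would induct on the homological degree, which is precisely where the admissibility hypothesis is used. For $c \in F_0 \setminus \{0\}$ the support is finite and nonempty with real-valued heights, so $v(c) \in \R$. In the inductive step, admissibility guarantees $\partial x \neq 0$ for each $x \in X_n$, so $v(x) = v(\partial x) \in \R$ by hypothesis; consequently $v(c)$ is a finite minimum of real numbers for any nonzero $c \in F_n$, and the general case $c = \sum c_i$ follows from $v(c) = \inf_i v(c_i)$. Finally, \textbf{(6)} reduces to the computation $v(\partial(gx)) = v(g \partial x) = \chi(g) + v(\partial x) = \chi(g) + v(x)$ on a basis chain (by (4) and the defining equality $v(x) = v(\partial x)$); for a general $c = \sum r_{g,x}(gx) \in \bigoplus_{i \geqslant 1} F_i$, expanding $\partial c$ and iterating (1) and (2) yields $v(\partial c) \geqslant \min\{\chi(g) + v(x) : gx \in \supp(c)\} = v(c)$. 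The only mild obstacle is bookkeeping in (5): admissibility is exactly what prevents $v(x) = \infty$ from occurring on a basis element and propagating to break the arithmetic of the other items; once that is in hand, everything else is routine.
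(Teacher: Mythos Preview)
Your proposal is correct and follows essentially the same approach as the paper: the same support-containment observations for (1), (2), (4); the same standard trick for (3); the same induction using admissibility for (5); and the same reduction to basis chains via (1), (2), (4) for (6). The only difference is cosmetic---the paper spells out the small case analysis in (3) (why $\min\{v(c+c'), v(c')\}$ must equal $v(c+c')$) a bit more explicitly than your phrase ``gives the reverse inequality'' does, but the argument is identical.
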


\begin{proof}
\ref{item:sum} follows from the fact that $\supp(c+c') \subseteq \supp(c) \cup \supp(c')$. The first part of \ref{item:scalar} follows from the fact that $\supp(c) \supseteq \supp(rc)$. If $r$ is not a zero-divisor then $\supp(c) = \supp(rc)$, which yields the second statement of \ref{item:scalar}.

To prove \ref{item:sumequal}, assume without loss of generality that $v(c) < v(c')$. Then,
\[
    v(c) = v((c+c') - c') \geqslant \min\{ v(c+c'), v(-c') \} = \min\{ v(c+c'), v(c') \}.
\]
Since we assumed that $v(c) < v(c')$, the previous line implies that $\min\{ v(c+c'), v(c') \} = v(c + c')$; hence, $v(c) = \min\{ v(c), v(c') \} \geqslant v(c + c')$. But $v(c + c') \geqslant \min\{v(c),v(c')\}$ by \ref{item:sum}, so we obtain \ref{item:sumequal}.

For \ref{item:groupelement}, we have
\begin{align*}
    v(gc) &= \inf\{ \chi(h) + v(x) : hx \in \supp(gc) \} \\
    &= \inf\{ \chi(g (g\inv h)) + v(x) : hx \in g \cdot \supp c \} \\
    &= \chi(g) + \inf\{ \chi(g\inv h) + v(x) : (g\inv h)x \in \supp c \} \\
    &= \chi(g) + v(c).
\end{align*}

For \ref{item:infheight}, we first show that if $c \in F_n \setminus \{0 \}$, then $v(c) < \infty$ by induction on $n$. This is true for $n = 0$ since $\chi(G) \subseteq \R$. Now let $n > 0$. Since $c \neq 0$, there is some element $gx$ in its support, where $g \in G$ and $x \in X$. Then
\[
    v(c) \leqslant v(gx) = \chi(g) + v(x) = \chi(g) + v(\partial x) < \infty
\]
by the inductive hypothesis and by admissibility of $F_\bullet \rightarrow M \rightarrow 0$ with respect to $X$. For a general nonzero element $c \in F$, write $c = \sum_i c_i$ with $c_i \in F_i$. Then $v(c) = \inf_i\{ v(c_i) \} < \infty$ since at least one of the chains $c_i$ is nonzero. Conversely, if $c = 0$, then $v(c) = \infty$, since the infimum of the empty set is $\infty$.

For \ref{item:bdyincrease}, let $c = \sum_{g \in G, x \in X} r_{g,x} gx \in \bigoplus_{i \geqslant 1} F_i$. Then
\begin{align*}
    v(\partial c) &= v\left(\partial\left(\sum_{g \in G, x \in X} r_{g,x} gx\right)\right) \\
    &= v\left( \sum_{g \in G, x \in X} r_{g,x} g \partial x \right) \\
    &\geqslant \inf\left\{ v(r_{g,x} g\partial x) : gx \in \supp(c) \right\} \tag{by \ref{item:sum}} \\
    &\geqslant \inf\left\{ v( g\partial x) : gx \in \supp(c) \right\}  \tag{by \ref{item:scalar}} \\
    &= \inf\{ \chi(g) + v(\partial x) : gx \in \supp(c) \} \\
    &= \inf\{ \chi(g) + v(x) : gx \in \supp(c) \} \\
    &= v(c). \qedhere
\end{align*}
\end{proof}

\begin{defn}[valuation subcomplex and essential acyclicity]
    Given an admissible free resolution $F_\bullet \rightarrow M \rightarrow 0$ over $RG$ (with respect to some fixed basis $X$), a non-trivial character $\chi \colon G \rightarrow \R$, and the valuation $v \colon F \rightarrow \R_\infty$ extending $\chi$, define the \textit{valuation subcomplex} of $F$ with respect to $v$ to be the chain complex $\cdots \rightarrow F_n^v \rightarrow \cdots \rightarrow F_0^v \rightarrow M \rightarrow 0$, where $F_n^v = \{c \in F_n : v(c) \geqslant 0\}$. We denote the valuation subcomplex by $F_\bullet^v \rightarrow M \rightarrow 0$ and let $F^v := \bigoplus_{i = 0}^\infty F_i^v$. \cref{PropPropsOfVal}\ref{item:bdyincrease} ensures that $F_\bullet^v \rightarrow M \rightarrow 0$ is a chain complex of left $RG_\chi$-modules, where $G_\chi$ is the monoid $\{g \in G : \chi(g) \geqslant 0\}$. It is not hard to show that each $F_i^v$ is a free $RG_\chi$-module and has an $RG_\chi$-basis of cardinality $\verti{X_i}$, where $X_i$ is an $RG$-basis for $F_i$.

    The chain  complex $F_\bullet^v \rightarrow M \rightarrow 0$ is \textit{essentially acyclic in dimension $n$} if there is a real number $D \geqslant 0$ such that for every cycle $z \in F_n^v$ there is a $c \in F_{n+1}$ with $\partial c = z$ and $D \geqslant v(z) - v(c)$. We extend the definition of essential acyclicity to dimension $-1$ by declaring that $v(m) = 0$ for all $m \in M \setminus \{0\}$.
\end{defn}

The definition of essential acyclicity in dimension $n$ is equivalent to the following seemingly weaker condition: for every cycle $z \in F_n^v$, there is a $c \in F_{n+1}$ such that $\partial c = z$ and $v(c) \geqslant -D$. To see this, let $z \in F_n^v$ be a cycle. It is easily shown that $v(F) \subseteq \chi(G) \cup \{\infty\}$, so there is a $g \in G$ such that $\chi(g) = v(z)$. Since $g\inv z$ is also in $F_n^v$ with $v(g\inv z) = 0$, there is some $c \in F_{n+1}^v$ such that $\partial c = g\inv z$ and $v(c) \geqslant -D$. Thus, $\partial (gc) = z$, and $D \geqslant v(z) - v(gc)$.

\section{Horochains} \label{sec:horo}

\begin{defn}[complex of horochains and horo-acyclicity]
    Let $F_\bullet \rightarrow M \rightarrow 0$ be an admissible free resolution with respect to some basis $X$, let $\chi \colon G \rightarrow \R$ be a nontrivial character, and let $v \colon F \rightarrow \R_\infty$ be the valuation extending $\chi$. Define $\widehat{F}$ to be the left $RG$-module of chains that are finitely supported below every height. More precisely, $\widehat{F}$ is the $RG$-module of formal sums $\sum_{g \in G, x \in X} r_{g,x} g x$ such that $\{ gx : v(gx) \leqslant t, r_{g,x} \neq 0 \}$ is finite for every $t \in \R$. The elements of $\widehat{F}$ are called \textit{horochains}. If $\hat{c} \in \widehat{F}$, then its \textit{support} is $\supp_X(\hat{c}) := \{ gx : r_{g,x} \neq 0 \}$. Let $\widehat{F}_i \subseteq \widehat{F}$ be the subset of chains with support in $F_i$ and let $\widehat{F}^{(n)} := \bigoplus_{i = 0}^n \widehat{F}_i$. \cref{PropPropsOfVal}\ref{item:bdyincrease} guarantees that $\partial \colon F_n \rightarrow F_{n-1}$ extends to a map $\partial \colon \widehat{F}_n \rightarrow \widehat{F}_{n-1}$ in the obvious way, so we get a complex $\cdots \rightarrow \widehat{F}_n \rightarrow \cdots \rightarrow \widehat{F}_0 \rightarrow 0$. Note that $\widehat{F}$ is not equal to $\bigoplus_{i = 0}^\infty \widehat{F}_i$ since the support of a horochain might intersect infinitely many of the modules $\widehat{F}_i$. A cycle in the chain complex $\widehat{F}$ is called a \textit{horocycle}. We say that $F_\bullet \rightarrow M \rightarrow 0$ is \textit{horo-acyclic} in dimensions $n \geqslant 0$ with respect to $v$ if the chain complex $\cdots \rightarrow \widehat{F}_1 \rightarrow \widehat{F}_0 \rightarrow 0$ is acyclic in dimension $n$.
\end{defn}

We can extend the definition of $v$ to $\widehat{F}$ by defining $v(\hat{c}) := \inf\{ v(gx) : \supp(\hat{c})\}$ for any horochain $\hat{c}$. If $\hat{c} \neq 0$, then $\{ v(gx) : \supp(\hat{c})\}$ is nonempty and attains a minimum because chains are finitely supported below any given height.  Properties \ref{item:sum} through \ref{item:infheight} of \cref{PropPropsOfVal} hold in this setting with the same proofs. 

A version of \cref{PropPropsOfVal}\ref{item:bdyincrease} holds for horochains, namely we have $v(\partial \hat{c}) \geqslant v(\hat{c})$ for all horochains $\hat{c}$, but we need to modify the proof: If $\hat{c} = 0$, then the claim is clear. Otherwise, let $\hat{c} \neq 0$ be a horochain, and let $gx \in \supp(\hat{c})$ be such that $v(gx) = v(\hat{c})$. By the finite version of \ref{item:bdyincrease}, we have that $v(\partial g'x') \geqslant v(g'x') \geqslant v(gx)$ for every $g'x' \in \supp(\hat{c})$. Since every $g''x'' \in \supp(\partial \hat{c})$ is contained in $\supp(\partial g'x')$ for some $g'x' \in \supp(\hat{c})$, we have that $v(g''x'') \geqslant v(\partial g'x') \geqslant v(gx)$ for every $g''x'' \in \supp(\partial \hat{c})$. Thus, $v(\partial \hat{c}) \geqslant v(\hat{c})$.

\smallskip

The following lemma will be used in the proof of \cref{thm:Main}.

\begin{lem} \label{lem:ExtendHom}
Let $F_\bullet \rightarrow M \rightarrow 0$ (resp.~$F_\bullet' \rightarrow M \rightarrow 0$) be a free resolution over $RG$ admissible with respect to a basis $X$ (resp.~$X'$), and let $v$ (resp.~$v'$) be the valuation extending a nontrivial character $\chi\colon G \rightarrow \R$. Suppose that $F^{(n)}$ is finitely generated, and that $\varphi \colon F \rightarrow F'$ is a homomorphism of $RG$-modules. Then
\begin{enumerate}[label=(\arabic*)]
    \item \label{item:extend} $\varphi$ induces a homomorphism of left $RG$-modules given by
    \[
        \widehat{\varphi} \colon \widehat{F}^{(n)} \rightarrow \widehat{F}', \ \sum r_{g,x} gx \mapsto \sum r_{g,x} g \varphi(x) \ \ ;
    \]
    \item \label{item:valIneq} $v'(\widehat{\varphi}(\hat c)) \geqslant v(\hat{c}) + \min_{x \in X^{(n)}} \{ v'(\varphi(x)) - v(x) \}$ for every $\hat{c} \in F^{(n)}$.
\end{enumerate}
\end{lem}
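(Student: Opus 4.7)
The plan is to prove both parts simultaneously by tracking how heights transform under $\varphi$. The crucial input is that $F^{(n)}$ being finitely generated forces $X^{(n)} = \bigcup_{i=0}^n X_i$ to be finite, so the minimum
\[
D := \min_{x \in X^{(n)}} \{ v'(\varphi(x)) - v(x) \}
\]
is attained in $\R \cup \{\infty\}$, giving a \emph{uniform} lower bound on how much $\varphi$ can lower heights.

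First I would set up the bookkeeping. For each $x \in X^{(n)}$, write $\varphi(x) = \sum_{(h', y')} \alpha^x_{h', y'} h' y'$ as a finite $R$-linear combination of basis elements in $X'$. Given a horochain $\hat c = \sum r_{g,x} gx \in \widehat{F}^{(n)}$, expand
\[
\sum_{(g,x)} r_{g,x} g \varphi(x) \;=\; \sum_{(g,x),(h',y')} r_{g,x} \alpha^x_{h',y'} (gh') y'
\]
and collect like terms to view this as a formal sum $\sum_{(h,y)} s_{h,y} hy$ over $X'$. Using \cref{PropPropsOfVal}\ref{item:groupelement} together with the definition of $v'(\varphi(x))$, every basis element $hy = (gh') y'$ appearing in this expansion satisfies
\[
v'(hy) \;=\; \chi(g) + v'(h'y') \;\geqslant\; \chi(g) + v'(\varphi(x)) \;\geqslant\; \chi(g) + v(x) + D \;=\; v(gx) + D.
\]

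For part (1), I would use this inequality to verify the horochain condition. Fix $t \in \R$. Any $(h,y)$ in the support of the formal sum with $v'(hy) \leqslant t$ comes from some $(g,x) \in \supp(\hat c)$ with $v(gx) \leqslant t - D$; there are only finitely many such $(g,x)$ because $\hat c$ is a horochain, and each contributes only finitely many terms because $\varphi(x) \in F'$ has finite support. Hence $\widehat{\varphi}(\hat c) \in \widehat{F}'$, and $RG$-linearity of $\widehat{\varphi}$ is immediate from the defining formula. For part (2), taking the infimum of $v'(hy) \geqslant v(gx) + D$ over $(h,y) \in \supp(\widehat{\varphi}(\hat c))$ (equivalently, over the contributing $(g,x) \in \supp(\hat c)$) yields $v'(\widehat{\varphi}(\hat c)) \geqslant v(\hat c) + D$ directly.

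The main obstacle, such as it is, lies in (1): one must ensure that the formal sum defining $\widehat{\varphi}(\hat c)$ really is a horochain, and this is exactly where finite generation of $F^{(n)}$ is essential, guaranteeing $D > -\infty$. Without it, contributions from infinitely many different basis elements $x \in X^{(n)}$ could accumulate below a given height, destroying local finiteness.
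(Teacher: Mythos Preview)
Your proof is correct and follows essentially the same approach as the paper: both arguments hinge on the uniform lower bound $D = \min_{x \in X^{(n)}}\{v'(\varphi(x)) - v(x)\}$, which is finite (or $\infty$) precisely because $X^{(n)}$ is finite. The only difference is organizational: you establish the pointwise inequality $v'(hy) \geqslant v(gx) + D$ once and derive both parts from it directly, whereas the paper proves \ref{item:extend} by contradiction via pigeonhole on $X^{(n)}$ and then computes \ref{item:valIneq} separately by decomposing $\hat c = \sum_{x} \hat c_x$.
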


\begin{proof}
For \ref{item:extend}, we need to show that $\widehat{\varphi}(\hat{c})$ is a horochain  for any horochain $\hat{c} \in \widehat{F}^{(n)}$. To this end, let $\hat{c} = \sum r_{g,x} gx$, and note that there are only finitely many elements $x \in X$ such that $gx \in \supp_X(\hat{c})$. If $\widehat{\varphi}(\hat{c})$ is not a horochain, then the set $\{ gx \in \supp_X(\hat{c}) : v'(g\varphi(x)) \leqslant t \}$ is infinite for some $t \in \R$. Since $F^{(n)}$ is finitely generated, there is some fixed $y \in X$ such that $v'(g\varphi(y)) \leqslant t$ and $gy \in \supp_X(\hat{c})$ for infinitely many values of $g \in G$. But then
\begin{align*}
    v(gy) &= \chi(g) + v(y) \\
        &= \chi(g) + v'(\varphi(y)) + v(y) - v'(\varphi(y))\\
        &= v'(g\varphi(y)) + v(y) - v'(\varphi(y)) \\
        &\leqslant t + v(y) - v'(\varphi(y))
\end{align*}
for infinitely many $gy \in \supp_X(\hat{c})$, but $\hat{c}$ is a horochain. 

For \ref{item:valIneq}, write $\hat{c} = \sum_{x\in X^{(n)}} \hat{c}_x$, where $\hat{c}_x = \sum_{g \in G} r_{g,x} g x$. Then
\begin{align*}
    v'(\widehat{\varphi}(\hat{c})) &\geqslant \min_{x \in X^{(n)}}\{ v'(\widehat{\varphi}(\hat{c}_x))\} \\
    &\geqslant \min_{x \in X^{(n)}} \{ \inf\{ v'(g\varphi(x)) : gx \in \supp \hat{c}_x \}   \} \\
    &= \min_{x \in X^{(n)}}\{ \inf\{ v(gx) : gx \in \supp \hat{c}_x \} + v'(\varphi(x)) - v(x)  \} \\
    &= \min_{x \in X^{(n)}} \{ v(\hat{c}_x) + v'(\varphi(x)) - v(x)  \} \\
    &\geqslant \min_{x \in X^{(n)}} \{ v(\hat{c}_x) \} +  \min_{x \in X^{(n)}}\{ v'(\varphi(x)) - v(x) \} \\
    &= v(\hat{c}) + \min_{x \in X^{(n)}} \{ v'(\varphi(x)) - v(x) \}. \qedhere
\end{align*}  \qedhere
\end{proof}

Note that \cref{lem:ExtendHom}\ref{item:valIneq} applies to chains in $F$, since these are just finite horochains. We will use this in the proof \cref{thm:Main}.

\section{Characterisations of the \texorpdfstring{$\Sigma$}{Sigma}-invariant} \label{sec:SigInv}

We introduce the invariants $\Sigma^n_R(G;M)$, which are generalisations of the classical Bieri--Neumann--Strebel invariant \cite{BNSinv87} and its higher dimensional analogues \cite{BieriRenzValutations}. The only difference is that we work over a general ring $R$, while the higher BNS invariants are defined over $\Z$.

Let $G$ be a group. We declare two characters $\chi,\chi' \colon G \rightarrow \R$ to be \textit{equivalent} if $\chi = \alpha \cdot \chi'$ for some $\alpha > 0$ and let $S(G)$ denote the set of equivalence classes of nonzero characters. We call $S(G)$ the \textit{character sphere} of $G$, because it can be given the topology of a sphere when $G$ is finitely generated. 

\begin{defn}[$\Sigma$-invariants]
Let $M$ be an $RG$-module. Then define
\[
\Sigma^n_R(G;M) = \{ [\chi] \in S(G) : M \in \mathtt{FP}_n(RG_\chi)  \},
\]
where $G_\chi = \{ g \in G : \chi(g) \geqslant 0 \}$. Note that $G_\chi = G_{\chi'}$ if $[\chi] = [\chi']$, so $\Sigma^n_R(G;M)$ is well-defined.
\end{defn}

\begin{defn}[Novikov ring]
Let $G$ be a group, let $R$ be a ring, and let $\chi \colon G \rightarrow \R$ be a character. Then the \textit{Novikov ring} $\widehat{RG}^\chi$ is the set of formal sums
\[
\sum_{g \in G} r_g g
\]
such that $\{ g \in G : r_g \neq 0 \ \text{and} \ \varphi(g) \leqslant t \}$ is finite for every $t \in \R$. We give $\widehat{RG}^\chi$ a ring structure by defining $rg + r'g := (r + r')g$ and $rg \cdot r'g' := rr' gg'$ for $r,r' \in R$, $g,g' \in G$, and extending multiplication to all of $\widehat{RG}^\chi$ in the obvious way.
\end{defn}

\cref{thm:Main} gives several characterisations of the $\Sigma$-invariants and is the main technical tool we will need to prove \cref{thm:agrarianMain}. More specifically, we will need the characterisation of $\Sigma_R^n(G;M)$ in terms of the vanishing of Novikov homology; this is the equivalence of \ref{item:SigmaInv} and \ref{item:TorCond} in the following theorem, which should be thought of as a higher dimensional version of Sikorav's theorem \cite{SikoravThese}.

\begin{thm} \label{thm:Main}
    Let $R$ be a ring, let $M$ be a left $RG$-module of type $\mathtt{FP}_n$, and let $\chi \colon G \rightarrow \R$ be a non-trivial character. Let $F_\bullet \rightarrow M \rightarrow 0$ be a free resolution admissible with respect to a basis $X = \bigcup_{i = 0}^\infty X_i$ and with finitely generated $n$-skeleton $F^{(n)}$. Let $v \colon F \rightarrow \R_\infty$ be the valuation extending $\chi$ with respect to $X$. The following are equivalent:
    \begin{enumerate}[label=(\arabic*)]
        \item\label{item:SigmaInv} $[\chi] \in \Sigma_R^n(G;M)$;
        \item\label{item:EssAcyc} $F_\bullet^v \rightarrow M \rightarrow 0$ is essentially acyclic in dimensions $-1, \dots, n-1$;
        \item\label{item:LiftId} there is a chain map $\varphi \colon F \rightarrow F$ lifting the identity $\id_M$ such that $v(\varphi(c)) > v(c)$ for every $c \in F^{(n)}$;
        \item\label{item:HoroAcyc} $F_\bullet \rightarrow M \rightarrow 0$ is horo-acyclic in dimensions $0, \dots, n$ with respect to $v$;
        \item\label{item:TorCond} $\Tor_i^{RG}(\widehat{RG}^\chi,M) = 0$ for all $0 \leqslant i \leqslant n$.
    \end{enumerate}
\end{thm}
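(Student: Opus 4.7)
The plan is to prove the equivalence via a cyclic chain of implications $(1) \Rightarrow (2) \Rightarrow (3) \Rightarrow (4) \Rightarrow (5) \Rightarrow (1)$, following the template of \cite{BieriRenzValutations} and Schweitzer's appendix to \cite{BieriDeficiency}; the proofs there carry over to a general ring $R$ without essential change. The key observation I would use throughout is that since $\chi$ is nontrivial, $\chi(G) \subseteq \R$ contains arbitrarily large positive values, so any chain may be translated by a group element $g \in G$ to raise its valuation by a prescribed amount.

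For $(1) \Rightarrow (2)$, I would fix a length-$n$ partial free resolution $Q_\bullet \to M$ over $RG_\chi$ with each $Q_i$ finitely generated, supplied by $M \in \FP_n(RG_\chi)$. Comparison of resolutions yields $RG_\chi$-linear chain maps between $F^v_\bullet$ and $Q_\bullet$, and finite generation of each $Q_i$ forces a uniform upper bound on the valuation loss incurred in lifting cycles, which is exactly essential acyclicity. For $(2) \Rightarrow (3)$, I would build $\varphi$ inductively on the skeleta $X_0, \ldots, X_n$: in dimension $i$, the inductive hypothesis makes $\varphi(\partial x)$ a cycle in $F_{i-1}$ of valuation exceeding $v(x)$, essential acyclicity in dimension $i-1$ yields a lift in $F_i$ with bounded valuation drop, and translation by a suitably positive element of $G$ raises this lift to the desired level; the base case $i = 0$ uses essential acyclicity in dimension $-1$, and finite generation of $F^{(n)}$ lets a single translation serve uniformly. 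For $(3) \Rightarrow (4)$, \cref{lem:ExtendHom} extends $\varphi$ to $\widehat\varphi \colon \widehat F^{(n)} \to \widehat F^{(n)}$, and the strict increase in valuation makes $\Phi := \sum_{k \geq 0} \widehat\varphi^k$ converge in the horo-topology to a two-sided inverse of $\id - \widehat\varphi$. Because $\varphi$ and $\id$ both lift $\id_M$, standard homological algebra supplies a chain homotopy $h$ with $\partial h + h \partial = \id - \varphi$, which extends to $\widehat h$ by the same lemma. For a horo-cycle $\hat z \in \widehat F_i$ with $i \leq n$, the chain-homotopy identity together with the fact that $\Phi$ is itself a chain map gives $\hat z = \partial \widehat h (\Phi(\hat z))$, exhibiting $\hat z$ as a boundary.

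For $(4) \Rightarrow (5)$, the finite generation of $F^{(n)}$ identifies $\widehat F^{(n)}$ with $F^{(n)} \otimes_{RG} \widehat{RG}^\chi$ in the relevant range, so horo-acyclicity in dimensions $0, \ldots, n$ translates directly into vanishing of $\Tor_i^{RG}(M, \widehat{RG}^\chi)$ for $i$ in that range (dimension $n$ requires slightly more care, using that cycles in the $n$-skeleton already lie in the Novikov tensor before any filling is sought). The main obstacle is $(5) \Rightarrow (1)$, which is the analog of Sikorav's theorem in this context. The plan there is to use the vanishing of Novikov homology to build a length-$n$ free resolution of $M$ by finitely generated $RG_\chi$-modules inductively: at each stage, a generating set for the relevant kernel over $RG_\chi$ is extracted from the boundaries produced by the Novikov hypothesis, and horo-finiteness lets one truncate these boundaries to finitely supported chains whose images over $RG_\chi$ give the required generators. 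This transfer of the Bieri--Renz--Schweitzer argument to a general ring $R$ is routine but is the technically most involved step.
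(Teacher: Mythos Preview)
Your implications $(2)\Rightarrow(3)\Rightarrow(4)$ match the paper's argument essentially verbatim (the paper packages your $\widehat h \circ \Phi$ as the explicit series $\hat c_{\hat z}=\sum_{i\geq 0}\widehat H\widehat\varphi^i(\hat z)$ in a separate lemma). Two points of divergence are worth flagging.

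First, in $(4)\Rightarrow(5)$ your parenthetical locates the difficulty in dimension $n$ incorrectly. The cycle certainly lies in $\widehat{RG}^\chi\otimes_{RG}F_n$ because $X_n$ is finite; the issue is that the \emph{filling} produced by horo-acyclicity lives in $\widehat F_{n+1}$, which may strictly contain $\widehat{RG}^\chi\otimes_{RG}F_{n+1}$ when $X_{n+1}$ is infinite. The paper resolves this by reusing condition $(3)$ (already established in your chain): the specific filling $\hat c_{\hat z}=\sum\widehat H\widehat\varphi^i(\hat z)$ lies in the $\widehat{RG}^\chi$-span of the finite set $\widehat H(X_n)$, hence in the image of the comparison map $\psi$.

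Second, and more substantially, the paper does \emph{not} attempt a direct $(5)\Rightarrow(1)$. Instead it closes the loop differently: it proves $(4)\Rightarrow(2)$ by an induction that bootstraps essential acyclicity one dimension at a time (using the already-proved $(2)\Rightarrow(3)$ at level $k-1$ to manufacture a chain homotopy with controlled valuation loss, and then horo-acyclicity to handle dimension $k$), and it proves $(1)\Leftrightarrow(2)$ by an entirely separate argument via direct limits of the truncated complexes $E_k=g^kF^v$, together with the Bieri--Eckmann characterisation of $\FP_n$ in terms of $\Tor^{RG_\chi}_j(M,\prod_I RG_\chi)$. Your proposed $(1)\Rightarrow(2)$ by comparison with a finite $RG_\chi$-resolution and $(5)\Rightarrow(1)$ by a Sikorav-style truncation are plausible alternative routes, but the latter is genuinely different from the paper and your sketch (``horo-finiteness lets one truncate these boundaries'') does not yet explain why the truncated chains generate the kernel over $RG_\chi$; this is exactly the step the paper sidesteps with its direct-limit/Tor argument.
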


The strategy of the proof will be as follows: we begin by proving \ref{item:EssAcyc} $\Rightarrow$ \ref{item:LiftId} $\Rightarrow$ \ref{item:HoroAcyc} $\Rightarrow$ \ref{item:EssAcyc}. This is done by  Schweitzer in the appendix of \cite{BieriDeficiency} in the case $R = \Z$. Once this is done, we prove the equivalence of \ref{item:HoroAcyc} and \ref{item:TorCond}, again following Schweitzer. Finally, we prove the equivalence of \ref{item:SigmaInv} and \ref{item:EssAcyc} following the appendix to Theorem 3.2 in \cite{BieriRenzValutations}, where again this is done in the case $R = \Z$. The proofs below are essentially the same as those given in the references just cited; there is no crucial dependence on the coefficient ring $R$.

\begin{proof}[Proof of \ref{item:EssAcyc} $\Rightarrow$ \ref{item:LiftId}]
Assume that $F_\bullet^v \rightarrow M \rightarrow 0$ is essentially acyclic in dimensions $\leqslant n-1$ and let $D > 0$ be a constant such that for each $k < n$ and every cycle $z \in F_k$, there is a chain $c \in F_{k+1}$ with $\partial c = z$ and $D \geqslant v(z) - v(c)$. 
We will construct a chain map $\varphi \colon F \rightarrow F$ lifting $\id_M$ such that $v(\varphi(c)) > v(c) + (n - k)D$ for every $c \in F^{(k)}$, which implies \ref{item:LiftId}.

We define $\varphi$ on $F^{(k)}$ by induction on $k$. For the base case, let $x \in X_0$ be arbitrary, and fix some $g \in G$ such that $\chi(g) > (n+1)D$. The element $g\inv\partial x \in M$ is a cycle, so there is some $c_x \in F_0$ such that $\partial c_x = g\inv \partial x$ and $D \geqslant v(g\inv \partial x) - v(c_x) = -v(c_x)$, since $v|_{M \setminus\{0\}} = 0$. Define $\varphi$ on $F^{(0)}$ by setting $\varphi(x) = gc_x$ for each $x \in X_0$. It is  clear that $\mathrm{id}_M  \partial = \partial  \varphi$ on $F^{(0)}$. By \cref{lem:ExtendHom}\ref{item:valIneq},
\[
v(\varphi(c)) \geqslant v(c) + \min_{x \in X_0} \{v(\varphi(x)) - v(x)\} > v(c) + nD
\]
for every $c \in F^{(0)}$.

Let $k > 0$ and suppose $\varphi$ is defined on $F^{(k-1)}$ such that it lifts $\id_M$ and $v(\varphi(c)) > v(c) + (n - k + 1)D$ for all $c \in F^{(k-1)}$. Let $x \in X_k$ and note that $\varphi(\partial x)$ is a cycle. By essential acyclicity, there is a chain $d_x \in F_k$ such that $\partial d_x = \varphi(\partial x)$ and $D \geqslant v(\varphi(\partial x)) - v(d_x)$. Define $\varphi$ on $F^{(k)}$ by setting $\varphi(x) = d_x$. Then $\varphi \partial = \partial \varphi$ by construction, and for every $x \in X_k$ we have
\begin{align*}
v(\varphi(x)) - v(x) &= v(d_x) - v(x) \\
    &\geqslant v(\varphi(\partial x)) - v(x) - D \\
    &= v(\varphi(\partial x)) - v(\partial x) - D \\
    &> (n-k)D
\end{align*}
by induction. By \cref{lem:ExtendHom}\ref{item:valIneq}, we have
\[
v(\varphi(c)) \geqslant v(c) + \min_{x \in X_k} \{ v(\varphi(x)) - v(x) \} > v(c) + (n-k)D. \qedhere
\]
\end{proof}

We pause here to prove a lemma that will immediately imply \ref{item:LiftId} $\Rightarrow$ \ref{item:HoroAcyc} and will be useful in the proofs of \ref{item:HoroAcyc} $\Rightarrow$ \ref{item:EssAcyc} and \ref{item:HoroAcyc} $\Rightarrow$ \ref{item:TorCond}. We recall that the maps $\widehat{H}$ and $\widehat{\varphi}$ that appear in the statement of the lemma below are defined in \cref{lem:ExtendHom}.

\begin{lem}\label{lem:Useful}
    With the assumptions of \cref{thm:Main}, let $\varphi \colon F \rightarrow F$ be a chain map lifting $\id_M$ such that $v(\varphi(c)) > v(c)$ for all $c \in F^{(n)}$ and let $H \colon F \rightarrow F$ be a chain homotopy such that $\partial H + H \partial = \id_{F} - \varphi$. Let $\hat{z} \in \widehat{F}^{(n)}$ be a horocycle and define $\hat{c}_{\hat{z}} := \sum_{i = 0}^\infty \widehat{H} \widehat{\varphi}^i(\hat{z})$. Then $\hat{c}_{\hat{z}}$ is a horochain and $\partial \hat{c}_{\hat{z}} = \hat{z}$.
\end{lem}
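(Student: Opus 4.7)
The plan is to exploit the finite generation of $F^{(n)}$ to upgrade the pointwise inequality $v(\varphi(c)) > v(c)$ into a uniform strictly positive gap, and then show that the series $\sum_i \widehat{H}\widehat{\varphi}^i(\hat z)$ makes sense in the horochain formalism because its terms have valuations tending to $+\infty$. First I would set
\[
    \epsilon := \min_{x \in X^{(n)}} \{v(\varphi(x)) - v(x)\}, \qquad \delta := \min_{x \in X^{(n)}} \{v(H(x)) - v(x)\},
\]
noting that both minima are taken over the finite set $X^{(n)}$; by hypothesis $\epsilon > 0$, while $\delta$ is merely finite. Applying \cref{lem:ExtendHom}\ref{item:valIneq} to $\widehat{\varphi}$ and $\widehat{H}$ on $\widehat{F}^{(n)}$ gives $v(\widehat{\varphi}(\hat c)) \geqslant v(\hat c) + \epsilon$ and $v(\widehat{H}(\hat c)) \geqslant v(\hat c) + \delta$. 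Since chain maps preserve degree, $\widehat{\varphi}$ restricts to an endomorphism of $\widehat{F}^{(n)}$, so its iterates are well-defined and satisfy $v(\widehat{\varphi}^i(\hat z)) \geqslant v(\hat z) + i\epsilon$, whence $v(\widehat{H}\widehat{\varphi}^i(\hat z)) \geqslant v(\hat z) + i\epsilon + \delta \to \infty$.

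Next I would verify that $\hat c_{\hat z}$ is a horochain. Fix a threshold $t \in \R$: only finitely many indices $i$ satisfy $v(\widehat{H}\widehat{\varphi}^i(\hat z)) \leqslant t$, by the growth bound above, so only finitely many summands contribute to $\supp(\hat c_{\hat z}) \cap \{ gx : v(gx) \leqslant t \}$. Each such summand is itself a horochain, hence finitely supported below $t$, so the total intersection is finite.

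Finally I would compute $\partial \hat c_{\hat z}$ termwise, which is legitimate because the same valuation bound controls truncations of $\partial \hat c_{\hat z}$ below every height. Because $\varphi$ is a chain map, so is $\widehat{\varphi}$, yielding $\partial \widehat{\varphi}^i(\hat z) = \widehat{\varphi}^i(\partial \hat z) = 0$; extending the chain homotopy relation $\partial H + H\partial = \id_F - \varphi$ to $\widehat{F}^{(n)}$ gives
\[
    \partial \widehat{H} \widehat{\varphi}^i(\hat z) = \widehat{\varphi}^i(\hat z) - \widehat{\varphi}^{i+1}(\hat z),
\]
and the sum telescopes. Since $v(\widehat{\varphi}^N(\hat z)) \to \infty$, for any fixed threshold the tail $\widehat{\varphi}^N(\hat z)$ eventually has no support below that threshold, so the partial sums stabilise to $\hat z$ beneath every height, giving $\partial \hat c_{\hat z} = \hat z$.

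The main obstacle is the bookkeeping in this last step: one must interpret convergence of formal sums of horochains via stabilisation of finite truncations below each height, rather than as a topological limit, and repeatedly invoke the uniform valuation bounds from \cref{lem:ExtendHom}\ref{item:valIneq} to justify interchanging $\partial$ with the infinite summation. The role of finite generation of $F^{(n)}$ is crucial here: without it, the constants $\epsilon, \delta$ would not be guaranteed to exist as a strictly positive and finite number respectively, and the series defining $\hat c_{\hat z}$ could fail to yield a horochain.
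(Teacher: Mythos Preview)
Your proposal is correct and follows essentially the same route as the paper's proof: both extract uniform constants from \cref{lem:ExtendHom}\ref{item:valIneq} (the paper calls them $\alpha,\beta$, you call them $\epsilon,\delta$), use $\epsilon>0$ to show the valuations $v(\widehat{H}\widehat{\varphi}^i(\hat z))$ tend to infinity so that $\hat c_{\hat z}$ is a horochain, and then telescope the sum using the homotopy relation. You are somewhat more explicit than the paper about why $\partial$ may be exchanged with the infinite sum and why $\widehat{\varphi}$ preserves $\widehat{F}^{(n)}$, but the argument is the same.
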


\begin{proof}
    By \cref{lem:ExtendHom}\ref{item:valIneq} there are constants $\alpha$ and $\beta$ such that
    \[
        v(\widehat{\varphi}(\hat{c})) \geqslant v(\hat{c}) + \alpha \ \ \text{and} \ \ v(\widehat{H}(\hat{c})) \geqslant v(\hat{c}) + \beta
    \]
    for every horochain $\hat{c} \in \widehat{F}^{(n)}$. Moreover, $\alpha > 0$ since $v(\varphi(f)) > v(f)$ for every $f \in F^{(n)}$. To see that $\hat{c}$ is a horochain, by induction we have $v(\widehat{H}\widehat{\varphi}^i(\hat{z})) \geqslant v(\hat{z}) + i\alpha + \beta$, so for all $t \in \R$ there are only finitely many integers $i \geqslant 0$ such that $v(\widehat{H} \widehat{\varphi}^i(\hat{z})) \leqslant t$. Since $\supp(\hat{c}_{\hat{z}}) \subseteq \bigcup_{i=0}^\infty \supp(\widehat{H}\widehat{\varphi}^i(\hat{z}))$ and each $\widehat{H} \widehat{\varphi}^i(\hat{z})$ is a horochain, it follows that there are only finitely many $gx \in \supp \hat{c}$ such that $v(gx) \leqslant t$, so $\hat{c}$ is a horochain.

    Finally, we have
    \begin{equation*}
        \partial\hat{c} = \sum_{i=0}^\infty \partial \widehat{H} \widehat{\varphi}^i(\hat{z}) = \sum_{i=0}^\infty (\id_{\widehat{F}^{(n)}} - \widehat{\varphi} - \widehat{H}\partial)\widehat{\varphi}^i(\hat{z}) = \sum_{i=0}^\infty (\widehat{\varphi}^i - \widehat{\varphi}^{i+1})(\hat{z}) = \hat{z}. \qedhere
    \end{equation*}
\end{proof}

\begin{proof}[Proof of \ref{item:LiftId} $\Rightarrow$ \ref{item:HoroAcyc}]
    By \cite[Lemma I.7.4]{BrownGroupCohomology}, there is a chain homotopy $H \colon F \rightarrow F$ such that $\partial H + H \partial = \id_F - \varphi$. If $\hat{z} \in \widehat{F}^{(n)}$ is a horocycle, then $\partial \hat{c}_{\hat{z}} = \hat z$ by \cref{lem:Useful}. 
\end{proof}

\begin{proof}[Proof of \ref{item:HoroAcyc} $\Rightarrow$ \ref{item:EssAcyc}] 
    We will prove that $F_\bullet^v \rightarrow M \rightarrow 0$ is essentially acyclic in dimension $k$ for all $k < n$ by induction on $k$. For the base case, we show that $F_\bullet^v \rightarrow M \rightarrow 0$ is exact at $M$, which implies essential acyclicity in dimension $-1$. Let $m \in M$. By exactness of $F_\bullet \rightarrow M \rightarrow 0$, there is a chain $c \in F_0$ such that $\partial c = m$. By horo-acyclicity in dimension $0$, there is some horochain $\hat{c} \in \widehat{F}_1$ such that $\partial \hat{c} = c$. There are $c_{-} \in F_1$ and $\hat{c}_+ \in \widehat{F}_1$ such that  $\hat{c} = c_{-} + \hat{c}_{+}$, where $v(c_{-}) < 0$ and $v(\hat{c}_{+}) \geqslant 0$. Then $\partial(c - \partial c_-) = m$ and 
    \[
        v(c - \partial c_-) = v(c - \partial(\hat{c} - \hat{c}_+)) = v(\partial \hat{c}_+) \geqslant v(\hat{c}_+) \geqslant 0.
    \]
    This shows that $c - \partial c_0 \in F_0^v$, which proves that $F_\bullet^v \rightarrow M \rightarrow 0$ is exact at $M$.

    Let $k > -1$ and suppose that $F_\bullet^v \rightarrow M \rightarrow 0$ is essentially acyclic in dimensions $< k$. By \ref{item:EssAcyc} $\Rightarrow$ \ref{item:LiftId} applied at $k-1$, there is a chain map $\varphi \colon F \rightarrow F$ lifting $\id_M$ such that $v(\varphi(c)) > v(c)$ for all $c \in F^{(k)}$. Since $\id_F$ and $\varphi$ both lift $\id_M$ and $F_\bullet \rightarrow M \rightarrow 0$ is acyclic, there is a chain homotopy $H \colon F \rightarrow F$ such that $\partial H + H \partial = \id_{F} - \varphi$ (see \cite[Lemma I.7.4]{BrownGroupCohomology}). As in the proof of \cref{lem:Useful}, there are constants $\alpha > 0$ and $\beta < 0$ such that 
    \[
        v(\varphi(c)) \geqslant v(c) + \alpha \ \ \text{and} \ \ v(H(c)) \geqslant v(c) + \beta
    \]
    for every $c \in F^{(k)}$.

    Let $z \in F^v_{k}$ be a cycle. Since $F_\bullet \rightarrow M \rightarrow 0$ is acyclic, there is some $d \in F_{k+1}$ such that $\partial d = z$. Consider the horocycle $\hat{z} := d - \hat{d}_z$, where $\hat{d}_z = \sum_{i = 0}^\infty H\varphi^i(z)$ is defined as in \cref{lem:Useful}. Note that
    \[
        v(H\varphi^i(z)) \geqslant v(z) + i\alpha + \beta \geqslant \beta
    \]
    for every $i \geqslant 0$, and therefore that $v(\hat{d}_z) \geqslant \beta$. By horo-acyclicity in dimension $k+1$, there is a $(k+2)$-horochain $\hat{d}$ such that $\partial \hat{d} = \hat{z}$. As in the base case, there are $d_- \in F_{k+2}$ and $\hat{d}_+ \in \widehat{F}_{k+2}$ such that $\hat{d} = d_- + \hat{d}_+$, where $v(d_-) < 0$ and $v(\hat{d}_+) \geqslant 0$. Then $\partial(d - \partial d_-) = \partial d = z$, and
    \begin{align*}
        v(d - \partial d_-) &= v(\hat{d}_z + \hat{z} - \partial(\hat{d} - \hat{d}_+)) \\
            &= v(\hat{d}_z + \partial \hat{d}_+) \\
            &\geqslant \min\{ v(\hat{d}_z), v(\partial \hat{d}_+) \} \\
            &\geqslant \beta
    \end{align*}
    since $v(\hat{d}_z) \geqslant \beta$ and $v(\partial d_\infty) \geqslant v(d_\infty) \geqslant 0 > \beta$. Letting $D = -\beta$ in the definition of essential acyclicity, we see that $F_\bullet^v \rightarrow M \rightarrow 0$ is essentially acyclic in dimension $k$. \qedhere
\end{proof}

\begin{proof}[Proof of \ref{item:TorCond} $\Rightarrow$ \ref{item:HoroAcyc}] 
    Suppose that $\Tor_i^{RG}(\widehat{RG}^\chi,M) = 0$ for $0 \leqslant i \leqslant n$. Consider the chain map
    \[
        \psi \colon \widehat{RG}^\chi \otimes_{RG} F \rightarrow \widehat{F}, \ \ \alpha \otimes c \mapsto \alpha c
    \]
    of left $RG$-modules. It is clear that $\psi$ is injective. We claim that $\psi$ induces an isomorphism $\widehat{RG}^\chi \otimes_{RG} F^{(n)} \rightarrow \widehat{F}^{(n)}$. To see this, simply note that for an arbitrary horochain
    \[
        \hat{c} = \sum_{g \in G, x \in X^{(n)}} r_{g,x} gx
    \]
    in $\widehat{F}^{(n)}$, we have
    \[
        \sum_{x \in X^{(n)}}  \left( \sum_{g\in G} r_{g,x} g \right) \otimes x \xmapsto{\varphi} \hat{c}.
    \]
    The horochain condition implies that the sums $\sum_{g\in G} r_{g,x} g$ are elements of $\widehat{RG}^\chi$. Thus, $\psi$ is surjective on the $n$-skeleta and is therefore an isomorphism. Note that this only works because $X^{(n)}$ is finite; in general, we cannot expect $\psi$ to be surjective since the support of a horochain might intersect infinitely many of the modules $F_n$. Since $\Tor_i^{RG}(\widehat{RG}^\chi \otimes_{RG} F, M) = 0$ for all $0 \leqslant i \leqslant n$, we conclude that $\Tor_i(\widehat{F}, M) = 0$ for $0 \leqslant i \leqslant n$ as well.
\end{proof}

\begin{proof}[Proof of \ref{item:HoroAcyc}
    $\Rightarrow$ \ref{item:TorCond}] The map $\psi$ defined above is an isomorphism of the $n$-skeleta, so we immediately have that $\Tor_i^{RG}(\widehat{RG}^\chi,M) = 0$ for $0 \leqslant i \leqslant n-1$. Since $\psi$ is not necessarily surjective as a map of the $(n+1)$-skeleta, we must work harder to show that $\Tor_n^{RG}(\widehat{RG}^\chi,M) = 0$. Let $z \in \widehat{RG}^\chi \otimes_{RG} F_n$ be an $n$-cycle, and let $\hat{z} = \psi(z)$. Since we are assuming that \ref{item:HoroAcyc} holds, we may also assume that \ref{item:LiftId} holds and use the horochain $\hat{c}_{\hat{z}}$ from \cref{lem:Useful}. Since $\hat{c}_{\hat{z}} \in \widehat{H}(\widehat{F}_n)$, we have that $\hat{c}_{\hat{z}}$ is in the $\widehat{RG}^\chi$-submodule of $\widehat{F}_{n+1}$ generated by $\widehat{H}(X_n)$, and thus $\hat{c}_{\hat{z}} \in \im \psi$ since this is a finite set. Let $c \in \widehat{RG}^\chi \otimes_{RG} F_{n+1}$ such that $\psi(c) = \hat{c}_{\hat{z}}$. Then $\psi \partial (c) = \partial \psi(c) = \partial \hat{c}_{\hat{z}} = \hat{z}$. But $\psi$ is injective, so $\partial c = z$, proving that $\Tor_n^{RG}(\widehat{RG}^\chi,M) = 0$.
\end{proof}

We pause again before proving the equivalence of \ref{item:SigmaInv} and \ref{item:EssAcyc} to prove another lemma. 

\begin{lem}\label{lem:flat}
    Free $RG$-modules are flat over $RG_\chi$.
\end{lem}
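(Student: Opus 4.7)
The plan is to realize $RG$ as a filtered colimit of free (hence flat) left $RG_\chi$-modules; since flatness is preserved by both filtered colimits and direct sums, this will yield the statement for arbitrary free $RG$-modules.

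First I would reduce to showing $RG$ itself is flat over $RG_\chi$: a free $RG$-module is a direct sum of copies of $RG$, and flatness is preserved under direct sums. Next, for each $g \in G$, I would observe that the right translate $RG_\chi \cdot g$ is a left $RG_\chi$-submodule of $RG$ (since left and right multiplications commute), and that the map $x \mapsto xg$ is an isomorphism of left $RG_\chi$-modules $RG_\chi \xrightarrow{\cong} RG_\chi \cdot g$. In particular, each $RG_\chi \cdot g$ is a free (hence flat) left $RG_\chi$-module of rank one.

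The key step is to exhibit an ascending chain of such submodules whose union is all of $RG$. Since $\chi \colon G \to \R$ is nontrivial, there exists $t \in G$ with $\chi(t) < 0$; setting $g_n := t^n$, the values $\chi(g_n) = n \chi(t)$ decrease to $-\infty$. A direct computation shows
\[
    RG_\chi \cdot g_n = R \cdot \{\ell \in G : \chi(\ell) \geqslant \chi(g_n)\},
\]
from which $RG_\chi g_n \subseteq RG_\chi g_{n+1}$ and $\bigcup_n RG_\chi g_n = RG$ follow immediately. Thus $RG$ is a filtered union of flat left $RG_\chi$-modules, and is therefore flat.

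I do not anticipate any serious obstacle: the argument is essentially the monoid-theoretic analogue of the classical fact that $RG$ is free over $RH$ whenever $H \leqslant G$ is a subgroup, with the modification that the submonoid $G_\chi$ forces us to pass to a colimit of right translates rather than work with a single one. The only thing one has to be mildly careful with is confirming that the right-multiplication $RG_\chi$-action is well defined on $RG_\chi \cdot g$ (which it is, since left and right multiplications commute in $RG$).
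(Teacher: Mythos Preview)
Your proposal is correct and follows essentially the same approach as the paper: both arguments reduce to $RG$ itself, choose an element of negative $\chi$-value, exhibit $RG$ as the directed union of the right translates $RG_\chi \cdot g^k$ (each isomorphic to $RG_\chi$ as a left $RG_\chi$-module via right multiplication), and then use that flatness is preserved under filtered colimits. The paper phrases the final step by directly checking that $M \otimes_{RG_\chi} RG \to N \otimes_{RG_\chi} RG$ is injective via exactness of the direct limit, whereas you invoke the closure of flatness under filtered colimits, but this is the same argument.
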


\begin{proof}
    It suffices to prove that $RG$ is flat as an $RG_\chi$-module, since the direct sum of flat modules is flat. To this end, let $\iota \colon M \hookrightarrow N$ be an injection of right $RG_\chi$-modules; our goal is to show that $\iota \otimes \id \colon M \otimes_{RG_\chi} RG \rightarrow N \otimes_{RG_\chi} RG$ is injective. Let $g \in G$ be such that $\chi(g) < 0$ and consider the left $RG_\chi$-module $RG_\chi g^k = \{ \alpha g^k : \alpha \in RG_\chi, k \in \Z \}$. The modules $RG_\chi g^k$ form a directed system with respect to the inclusion maps $RG_\chi g^k \hookrightarrow RG_\chi g^l$ for $k \leqslant l$ and the direct limit is $\varinjlim RG_\chi g^k \cong RG$.

    There are left $RG_\chi$-module isomorphisms $RG_\chi g^k \rightarrow RG_\chi$ given by right multiplication by $g^{-k}$. Then $RG_\chi g^k$ is flat over $RG_\chi$, so $M \otimes_{RG_\chi} RG_\chi g^k \rightarrow N \otimes_{RG_\chi} RG_\chi g^k$ is injective for all $k \in \Z$. By exactness of the direct limit,
    \[
        \varinjlim(M \otimes_{RG_\chi} RG_\chi g^k) \rightarrow \varinjlim(N \otimes_{RG_\chi} RG_\chi g^k)
    \]
    is injective. Since the direct limit commutes with the tensor product, the previous line implies $\iota \otimes \id_M$ is injective. \qedhere
\end{proof}

We now return to the proof of \cref{thm:Main}.

\begin{proof}[Proof of \ref{item:SigmaInv} $\Leftrightarrow$ \ref{item:EssAcyc}] 
    Let $g \in G$ be such that $\chi(g) < 0$ and let $E_k$ be the left $RG_\chi$-module $g^k F^v$. We denote the chain complexes $F^v_\bullet \rightarrow M \rightarrow 0$ and $(E_k)_\bullet \rightarrow M \rightarrow 0$ by $\widetilde{F}^v$ and $\widetilde{E}_k$, respectively.

    Essential acyclicity in dimension $j$ is equivalent to the existence of an integer $D \geqslant 0$ such that the inclusion-induced homomorphism $H_j(\widetilde{E}_k) \rightarrow H_j(\widetilde{E}_{k+D})$ is the zero map for all $k \in \N$. This in turn is equivalent to $\varinjlim \prod_{I} H_j(\widetilde{E}_k) = 0$ for any index set $I$. Here, for fixed $I$ and $j$, the powers $\prod_{I} H_j(\widetilde{E}_k)$ form a directed system with respect to the inclusion-induced maps $\prod_I H_j(\widetilde{E}_k) \rightarrow \prod_I H_j(\widetilde{E}_l)$ for $k \leqslant l$. Indeed, if $D \geqslant 0$ is such that $H_j(\widetilde{E}_k) \rightarrow H_j(\widetilde{E}_{k+D})$ is the zero map, it is clear that the direct limit will be zero. Conversely, let $I = Z_j(\widetilde{E}_0) = Z_j(\widetilde{F}^v)$ be the set of $j$-cycles of $\widetilde{F}^v$ and consider the element $([x])_{x \in I} \in \prod_I H_j(\widetilde{E}_0)$. Since the direct limit is zero, there is some $D \geqslant 0$ such that $([x])_{x \in I} = 0$ in $\prod_I H_j(\widetilde{E}_D)$, which means that $\widetilde{F}^v$ is essentially acyclic in dimension $j$.

    There is a short exact sequence of chain complexes $0 \rightarrow M \rightarrow \widetilde{E}_k \rightarrow E_k \rightarrow 0$, where, by abuse of notation, $M$ is a chain complex concentrated in dimension $-1$ and $E_k$ is the chain complex $(E_k)_\bullet \rightarrow 0$ with $(E_k)_0$ in dimension $0$. The long exact sequence in homology associated to the short exact sequence gives $H_j(\widetilde{E}_k) \cong H_j(E_k)$ for $j \geqslant 1$. The interesting part of the long exact sequence is
    \[
        0 \rightarrow H_0(\widetilde{E}_k) \rightarrow H_0(E_k) \xrightarrow{\delta} M \rightarrow H_{-1}(\widetilde{E}_k) \rightarrow 0,
    \]
    where $\delta$ is the connecting homomorphism. By exactness of the direct power and direct limit functors, the sequence
    \[
        0 \rightarrow \varinjlim \prod_I H_0(\widetilde{E}_k) \rightarrow \varinjlim \prod_I H_0(E_k) \xrightarrow{\prod_I \delta} \prod_I M \rightarrow \varinjlim \prod_I H_{-1}(\widetilde{E}_k) \rightarrow 0
    \]
    is exact. Then $\widetilde{F}^v$ is essentially acyclic in dimension $0$ if and only if $\delta$ induces an injection $\varinjlim \prod_I H_0(E_k) \rightarrow \prod_I M$ for every $I$. Moreover, $\widetilde{F}^v$ is essentially acyclic in dimension $-1$ if and only if $\delta$ induces a surjection $\varinjlim \prod_I H_0(E_k) \rightarrow \prod_I M$ for every $I$.

    By \cref{lem:flat},  $F_\bullet \rightarrow M \rightarrow 0$ is a flat resolution of $M$ by left $RG_\chi$-modules, so 
    \[
        \Tor^{RG_\chi}_j \left(\prod_I RG_\chi, M \right) = H_j\left( \left(\prod_I RG_\chi \right) \otimes_{RG_\chi} F \right),
    \]
    and therefore
    \[
        \Tor^{RG_\chi}_j \left(\prod_I RG_\chi, M \right) = \varinjlim H_j \left( \left(\prod_I RG_\chi \right) \otimes_{RG_\chi} E_k \right),
    \]
    as $F = \varinjlim E_k$ and direct limits commute with tensor products and homology. Since $(E_k)_j$ is a finitely generated free $RG_\chi$-module for $j \leqslant n$, we have $\left(\prod_I RG_\chi \right) \otimes_{RG_\chi} (E_k)_j \cong \prod_I (E_k)_j$. Hence, $\Tor_j^{RG_\chi}(\prod_I RG_\chi,M) = \varinjlim H_j(\prod_I E_k)$ for $j < n$.

    To summarise the work done above, we have $\widetilde{F}^v$ is essentially acyclic in dimensions $-1 \leqslant j < n$ if and only if
    \begin{enumerate}[label=(\alph*)]
        \item $(\prod_I RG_\chi) \otimes_{RG_\chi} M \rightarrow \prod_I M$ is surjective if $n = 0$ and
        \item $(\prod_I RG_\chi) \otimes_{RG_\chi} M \rightarrow \prod_I M$ is an isomorphism and 
        \[
            \Tor_j^{RG_\chi}(\prod_I RG_\chi,M)
        \]
        vanishes for $1 \leqslant j < n$ otherwise.
    \end{enumerate}
    Here we have used the general fact that $\Tor_0^R(A,B) \cong A \otimes_R B$. Together with Lemma 1.1 and Proposition 1.2 of \cite{BieriEckmannFinProps}, (a) and (b) are equivalent to $M$ being of type $\mathtt{FP}_n(RG_\chi)$. Thus, we conclude that $[\chi] \in \Sigma^m_R(G;M)$ if and only $\widetilde{F}^v$ is essentially acyclic in dimensions $j = -1, 0, 1, \dots, n-1$. \qedhere
\end{proof}

\section{Agrarian homology and main result} \label{sec:homology}

\begin{defn}[agrarian groups and $\mathcal{D}$-homology]
    Let $R$ be a ring. A group $G$ is \textit{agrarian over $R$} if there is a skew-field $\mathcal{D}$ and an injective ring homomorphism $R G \hookrightarrow \mathcal{D}$. In this case, we will say that $G$ is $\mathcal{D}$-agrarian over $R$ if we wish to specify the skew-field.

    If $G$ is $\mathcal{D}$-agrarian over $R$, we define its $p$-dimensional \textit{$\mathcal{D}$-homology} to be
    \[
        H_p^{\mathcal{D}} (G) := \Tor_p^{RG} (\mathcal{D}, R),
    \]
    where $R$ is the trivial $RG$-module and $\mathcal{D}$ is viewed as a $\mathcal{D}$-$RG$-bimodule via the embedding $RG \hookrightarrow \mathcal{D}$. The $p$th $\mathcal{D}$-Betti number of $G$ is then
    \[
        b_p^{\mathcal{D}}(G) := \dim_\mathcal{D} H_p^{\mathcal{D}} (G).
    \]
    Note that $b_p^{\mathcal{D}}(G)$ is well-defined and integral or infinite, since a module over a skew-field has a well-defined dimension.
\end{defn}

\begin{rem}
    The term ``agrarian" was introduced by Kielak in \cite{KielakBNSviaNewton} in the case $R = \mathbb{Z}$. Using strong Hughes-freeness, i.e.~condition \ref{item:2prime} after \cref{def:HfreeDiv}, it follows that if $G$ is a locally indicable group and $\mathbb{F}$ is a skew-field such that $\mathcal{D}_{\mathbb{F}G}$ exists, then $G$ is $\mathcal{D}_{\mathbb{F}G}$-agrarian over $\mathbb{F}$. If $R$ is a skew-field or an integral domain, then there are no known examples of torsion-free groups that are not agrarian over $R$. For the remainder of the section, we will be interested in the $\mathcal{D}_{\mathbb{F}G}$-homology of $G$.
\end{rem}

In what follows we will need that $\mathcal{D}_{\mathbb{F}G}$-Betti numbers have good scaling properties when passing to finite index subgroups. This is analogous to the fact that $\ell^2$-Betti numbers also scale under passage to finite index subgroups.

\begin{lem}\label{lem:JBscales}
    Let $H$ be a finite index subgroup of $G$ and let $\mathbb{F}$ be a skew-field such that $\mathcal{D}_{\mathbb{F}G}$ exists. Then 
    \[
        b_p^{\mathcal{D}_{\mathbb{F}G}}(G) = \frac{b_p^{\mathcal{D}_{\mathbb{F}H}}(H)}{[G:H]}.
    \]
\end{lem}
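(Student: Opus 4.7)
The plan is to reduce to the case where $H$ is normal in $G$, and then exploit the crossed-product description of $\mathcal{D}_{\mathbb{F}G}$ afforded by \cref{prop:twistedNormalSkew}\ref{item:twistedFiniteIndex}. For the reduction, let $N = \bigcap_{g \in G} gHg^{-1}$ be the normal core of $H$ in $G$, a finite-index normal subgroup of $G$ contained in $H$. Subgroups of locally indicable groups are locally indicable, and the discussion following \cref{def:HfreeDiv} guarantees that $\mathcal{D}_{\mathbb{F}N}$ and $\mathcal{D}_{\mathbb{F}H}$ exist as sub-skew-fields of $\mathcal{D}_{\mathbb{F}G}$. Granting the normal case applied to $N \triangleleft G$ and to $N \triangleleft H$, one obtains
\[
[G:N] \cdot b_p^{\mathcal{D}_{\mathbb{F}G}}(G) = b_p^{\mathcal{D}_{\mathbb{F}N}}(N) = [H:N] \cdot b_p^{\mathcal{D}_{\mathbb{F}H}}(H),
\]
and the multiplicativity $[G:N] = [G:H][H:N]$ delivers the claimed formula.

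To handle the normal case, assume $H \triangleleft G$ has finite index. By \cref{prop:twistedNormalSkew}\ref{item:twistedFiniteIndex}, $\mathcal{D}_{\mathbb{F}G} = \mathcal{D}_{\mathbb{F}H} * (G/H)$, so as a left $\mathcal{D}_{\mathbb{F}H}$-module $\mathcal{D}_{\mathbb{F}G}$ is free of rank $[G:H]$. Picking a right coset transversal $\{g_1, \dots, g_n\}$ for $H$ in $G$, I would establish a $(\mathcal{D}_{\mathbb{F}H}, \mathbb{F}G)$-bimodule isomorphism
\[
\mathcal{D}_{\mathbb{F}H} \otimes_{\mathbb{F}H} \mathbb{F}G \xrightarrow{\ \cong\ } \mathcal{D}_{\mathbb{F}G}, \qquad \alpha \otimes g \mapsto \alpha \varphi(g),
\]
by matching the decomposition $\mathbb{F}G = \bigoplus_i \mathbb{F}H g_i$ as a left $\mathbb{F}H$-module on the left with $\mathcal{D}_{\mathbb{F}G} = \bigoplus_i \mathcal{D}_{\mathbb{F}H} \varphi(g_i)$ on the right.

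Next, take a projective resolution $P_\bullet \to \mathbb{F}$ of the trivial $\mathbb{F}G$-module; since $\mathbb{F}G$ is free of finite rank over $\mathbb{F}H$, every projective $\mathbb{F}G$-module is also projective over $\mathbb{F}H$, so $P_\bullet \to \mathbb{F}$ is simultaneously a projective resolution over $\mathbb{F}H$. Associativity of tensor product then yields a chain isomorphism
\[
\mathcal{D}_{\mathbb{F}G} \otimes_{\mathbb{F}G} P_\bullet \cong (\mathcal{D}_{\mathbb{F}H} \otimes_{\mathbb{F}H} \mathbb{F}G) \otimes_{\mathbb{F}G} P_\bullet \cong \mathcal{D}_{\mathbb{F}H} \otimes_{\mathbb{F}H} P_\bullet
\]
of left $\mathcal{D}_{\mathbb{F}H}$-modules, and passing to homology produces an isomorphism $H_p^{\mathcal{D}_{\mathbb{F}G}}(G) \cong H_p^{\mathcal{D}_{\mathbb{F}H}}(H)$ of left $\mathcal{D}_{\mathbb{F}H}$-modules. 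The left-hand side is a $\mathcal{D}_{\mathbb{F}G}$-module, so its $\mathcal{D}_{\mathbb{F}H}$-dimension equals $[G:H] \cdot b_p^{\mathcal{D}_{\mathbb{F}G}}(G)$, while the right-hand side has $\mathcal{D}_{\mathbb{F}H}$-dimension $b_p^{\mathcal{D}_{\mathbb{F}H}}(H)$, closing the argument.

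The only real content is the bimodule identification in the second paragraph; the remainder is standard homological algebra. I expect this step to be the main point to verify, but it follows almost formally from the strong Hughes-freeness built into \cref{prop:twistedNormalSkew}.
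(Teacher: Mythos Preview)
Your proof is correct and follows essentially the same route as the paper: reduce to the normal case via the normal core, use \cref{prop:twistedNormalSkew}\ref{item:twistedFiniteIndex} to obtain the $(\mathcal{D}_{\mathbb{F}H},\mathbb{F}G)$-bimodule identification $\mathcal{D}_{\mathbb{F}G}\cong\mathcal{D}_{\mathbb{F}H}\otimes_{\mathbb{F}H}\mathbb{F}G$, and then compare dimensions after the chain isomorphism $\mathcal{D}_{\mathbb{F}G}\otimes_{\mathbb{F}G}P_\bullet\cong\mathcal{D}_{\mathbb{F}H}\otimes_{\mathbb{F}H}P_\bullet$. The only cosmetic difference is that the paper phrases things with a free resolution rather than a projective one.
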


\begin{proof} 
    It suffices to prove the claim when $H$ is normal in $G$. To see this, if $H \leqslant G$ is any subgroup of finite index, then there is normal subgroup $N \triangleleft G$ of finite index such that $N \leqslant H \leqslant G$ (we can take $N$ to be the \textit{normal core} of $H$, that is, the intersection of all the conjugates of $H$). Then
    \[
    b_p^{\mathcal{D}_{\mathbb{F}G}}(G) = \frac{b_p^{\mathcal{D}_{\mathbb{F}N}}(N)}{[G:N]} = \frac{[H:N] b_p^{\mathcal{D}_{\mathbb{F}H}}(H)}{[G:N]} = \frac{b_p^{\mathcal{D}_{\mathbb{F}H}}(H)}{[G:H]},
    \]
    by the claim for normal subgroups.

    Assume that $H$ is a finite index normal subgroup of $G$ and let $\{t_1, \dots, t_n \}$ be a transversal for $H$ in $G$. By \cref{prop:twistedNormalSkew}\ref{item:twistedFiniteIndex}, we have a $\mathcal{D}_{\mathbb{F}H}$-$\mathbb{F}G$-bimodule isomorphism $\mathcal{D}_{\mathbb{F}G} \cong  \mathcal{D}_{\mathbb{F}H} \otimes_{\mathbb{F}H} \mathbb{F}G$. Take a free resolution $F_\bullet \rightarrow \mathbb{F} \rightarrow 0$ of the trivial left $\mathbb{F}G$-module $\mathbb{F}$; there are chain isomorphisms
    \[
        \mathcal{D}_{\mathbb{F}G} \otimes_{\mathbb{F}G} F_\bullet 
        \cong (\mathcal{D}_{\mathbb{F}H} \otimes_{\mathbb{F}H} \mathbb{F}G) \otimes_{\mathbb{F}G} F_\bullet  
        \cong \mathcal{D}_{\mathbb{F}H} \otimes_{\mathbb{F}H} (\mathbb{F}G \otimes_{\mathbb{F}G} F_\bullet) 
        \cong \mathcal{D}_{\mathbb{F}H} \otimes_{\mathbb{F}H} F_\bullet
    \]
    of left $\mathcal{D}_{\mathbb{F}H}$-modules, so $H_p^{\mathcal{D}_{\mathbb{F}G}}(G) \cong H_p^{\mathcal{D}_{\mathbb{F}H}}(H)$ as $\mathcal{D}_{\mathbb{F}H}$-modules. Therefore,
    \begin{align*}
        b_p^{\mathcal{D}_{\mathbb{F}H}}(H) &= \dim_{\mathcal{D}_{\mathbb{F}H}} H_p^{\mathcal{D}_{\mathbb{F}H}}(H) \\
            &= \dim_{\mathcal{D}_{\mathbb{F}H}} H_p^{\mathcal{D}_{\mathbb{F}G}}(G) \\
            &= [G : H] \cdot \dim_{\mathcal{D}_{\mathbb{F}G}} H_p^{\mathcal{D}_{\mathbb{F}G}}(G) \\ 
            &= [G : H] \cdot b_p^{\mathcal{D}_{\mathbb{F}G}}(G). \qedhere
    \end{align*}
\end{proof}

In view of \cref{lem:JBscales}, if $G$ is a group with a finite index subgroup $H$ such that $\mathcal{D}_{\mathbb{F}H}$ exists, we can define $b_p^{\mathcal{D}_{\mathbb{F}G}}(G) = b_p^{\mathcal{D}_{\mathbb{F}H}}(H)/[G:H]$. This is an abuse of notation  since $\mathcal{D}_{\mathbb{F}G}$ might not exist.

The following is an analogue of a theorem of Lück which holds for $\ell^2$-Betti numbers \cite[Theorem 7.2]{Luck02}.

\begin{thm}\label{thm:JBSES}
Let $1 \rightarrow K \rightarrow G \rightarrow \Z \rightarrow 1$ be a short exact sequence of groups and suppose that $\mathcal{D}_{\mathbb{F}G}$ exists for some skew-field $\mathbb{F}$. If $b_p^{\mathcal{D}_{\mathbb{F}K}}(K) < \infty$ for some $p \geqslant 0$, then $b_p^{\mathcal{D}_{\mathbb{F}G}}(G) = 0$. 
\end{thm}
\begin{proof}
Let $F_\bullet \rightarrow \mathbb{F} \rightarrow 0$ be a free resolution of $\mathbb{F}$ by left $\mathbb{F} G$-modules. Note that the modules $F_j$ are also free left $\mathbb{F}K$-modules and there are chain maps $\iota_j \colon \mathcal{D}_{\mathbb{F}K} \otimes_{\mathbb{F}K} F_j \rightarrow \mathcal{D}_{\mathbb{F}G} \otimes_{\mathbb{F}G} F_j$, induced by the inclusion $\mathcal{D}_{\mathbb{F}K} \hookrightarrow \mathcal{D}_{\mathbb{F}G}$. We claim that the maps $\iota_j$ are injective. To see this, it is enough to consider the case where $F_j = \mathbb{F}G$. Choose $t \in G$ such that $\{ t^n : n \in \Z \}$ is a transversal for $K \leqslant G$. By \cref{prop:twistedNormalSkew}, there is an embedding $\bigoplus_{n \in \Z} \mathcal{D}_{\mathbb{F}K} \cdot \varphi(t^n) \hookrightarrow \mathcal{D}_{\mathbb{F}G}$. Since $\mathbb{F}G$ is free over $\mathbb{F}K$, there is also an isomorphism $\mathcal{D}_{\mathbb{F}K} \otimes_{\mathbb{F}K} \mathbb{F}G \cong \bigoplus_{n \in \Z} \mathcal{D}_{\mathbb{F}K} \cdot \varphi(t^n)$ determined by $\alpha \otimes t^n \mapsto \alpha \varphi(t^n)$. Then the diagram
\[
\begin{tikzcd}
\mathcal{D}_{\mathbb{F}K} \otimes_{\mathbb{F}K} \mathbb{F}G \arrow[d, "\iota_j"'] \arrow[r, "\cong", no head] & {\bigoplus_{n \in \mathbb{Z}} \mathcal{D}_{\mathbb{F}K} \cdot \varphi(t^n)}  \arrow[d, hook] \\
\mathcal{D}_{\mathbb{F}G} \otimes_{\mathbb{F}G} \mathbb{F}G \arrow[r, "\cong", no head]                       &  \mathcal{D}_{\mathbb{F}G}
\end{tikzcd}
\]
of left $\mathcal{D}_{\mathbb{F}K}$-modules commutes, proving that $\iota_j$ is an injection. From now on, we will treat the maps $\iota_j$ as inclusions.

Consider the following portions of the chain complexes computing $b_p^{\mathcal{D}_{\mathbb{F}G}}(G)$ and $b_p^{\mathcal{D}_{\mathbb{F}K}}(K)$
\[
\begin{tikzcd}[column sep = small]
\cdots \arrow[r] & \mathcal{D}_{\mathbb{F}K} \otimes_{\mathbb{F}K} F_{p+1} \arrow[d, hook, "\iota_{p+1}"] \arrow[r] & \mathcal{D}_{\mathbb{F}K} \otimes_{\mathbb{F}K} F_p \arrow[d, hook, "\iota_p"] \arrow[r] & \mathcal{D}_{\mathbb{F}K} \otimes_{\mathbb{F}K} F_{p-1} \arrow[d, hook, "\iota_{p-1}"] \arrow[r] & \cdots \\
\cdots \arrow[r] & \mathcal{D}_{\mathbb{F}G} \otimes_{\mathbb{F}G} F_{p+1} \arrow[r]                               & \mathcal{D}_{\mathbb{F}G} \otimes_{\mathbb{F}G} F_p \arrow[r]                           & \mathcal{D}_{\mathbb{F}G} \otimes_{\mathbb{F}G} F_{p-1} \arrow[r]                                 & \cdots \nospacepunct{.}
\end{tikzcd}
\]
Let $x$ be a cycle in $\mathcal{D}_{\mathbb{F}G} \otimes_{\mathbb{F}G} F_p$. By \cite[Proposition 2.2(2)]{JaikinZapirain2020THEUO}, $\mathcal{D}_{\mathbb{F}G} \cong \mathrm{Ore}(\mathcal{D}_{\mathbb{F}K} * \Z)$, where we are making the identifications $\mathcal{D}_{\mathbb{F}K} \otimes_{\mathbb{F}K} \mathbb{F}G \cong \bigoplus_{n \in \Z} \mathcal{D}_{\mathbb{F}K} \cdot \varphi(t^n) \cong \mathcal{D}_{\mathbb{F}K} * \Z$. Hence, there is some nonzero $a \in \mathcal{D}_{\mathbb{F}K} * \Z$ such that $ax \in \mathcal{D}_{\mathbb{F}K} \otimes_{\mathbb{F}K} F_p$. Since $(\mathcal{D}_{\mathbb{F}K} * \Z) \cdot ax \subseteq Z_p(\mathcal{D}_{\mathbb{F}G} \otimes_{\mathbb{F}G} F_\bullet)$ and $\iota_{p-1}$ is injective, $(\mathcal{D}_{\mathbb{F}K} * \Z) \cdot ax$ is an infinite-dimensional $\mathcal{D}_{\mathbb{F}K}$-subspace of $Z_p(\mathcal{D}_{\mathbb{F}K} \otimes_{\mathbb{F}K} F_\bullet)$. Since $b_p^{\mathcal{D}_{\mathbb{F}K}}(K) < \infty$, there is a nonzero $b \in \mathcal{D}_{\mathbb{F}K} * \Z$ such that $bax = \partial y$ for some $y \in \mathcal{D}_{\mathbb{F}K} \otimes_{\mathbb{F}K} F_{p+1}$. But then $x = \partial((ba)\inv y)$, so we conclude that $H_p(\mathcal{D}_{\mathbb{F}G} \otimes_{\mathbb{F}G} F_\bullet) = 0$. \qedhere
\end{proof}

For the proof of the main theorem, we will need the following version of a theorem of Bieri and Renz. The details of the proof are given in \cite[Theorem 5.1]{BieriRenzValutations} in the case $R = \Z$, though the proof goes through in exactly the same way after replacing the ring $\Z$ by an arbitrary ring $R$.

\begin{thm}[Bieri-Renz]\label{thm:genBR}
Let $G$ be a finitely generated group and let $N \triangleleft G$ be a normal subgroup containing the commutator subgroup $[G,G]$. Let $R$ be a unital ring and let $M$ be an $RG$-module of type $\mathtt{FP}_n(RG)$. Then $M \in \mathtt{FP}_n(RN)$ if and only if $\Sigma_R^n(G;M) \supseteq S(G,N) := \{ [\chi] \in S(G) : \chi(N) = 0 \}$.
\end{thm}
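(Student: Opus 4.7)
The theorem is a direct generalization of \cite[Theorem 5.1]{BieriRenzValutations} from $\Z$-coefficients to an arbitrary ring $R$. As the author notes, Bieri--Renz's proof is purely structural and goes through verbatim once $\Z$ is replaced by $R$. My plan is therefore to describe the shape of their argument and flag why no step depends on any arithmetic property of $\Z$.

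For the forward direction, suppose $M \in \FP_n(RN)$ and fix $[\chi] \in S(G,N)$. Since $\chi$ vanishes on $N$, the left cosets $\{gN : g \in G_\chi\}$ partition $G_\chi$ and each is contained in $G_\chi$ (because $\chi(gn) = \chi(g) \geqslant 0$), so $RG_\chi$ is a free left $RN$-module on any transversal. A change-of-rings argument in the spirit of Bieri--Renz then promotes a partial finitely generated $RN$-free resolution of $M$ to a partial finitely generated $RG_\chi$-free resolution with finitely generated $n$-skeleton, giving $[\chi] \in \Sigma^n_R(G;M)$.

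The backward direction is the main obstacle. Assuming $\Sigma^n_R(G;M) \supseteq S(G,N)$, I would proceed by induction on the torsion-free rank $r$ of the finitely generated abelian group $G/N$. For $r = 1$, $S(G,N)$ consists of two antipodal points $[\chi]$ and $[-\chi]$; applying \cref{thm:Main}\ref{item:EssAcyc} to both yields valuation subcomplexes $F^v_\bullet$ and $F^{v'}_\bullet$ that are essentially acyclic in dimensions $\leqslant n - 1$. The key construction is to patch these into a finite-type $RN$-complex: chains whose $v$-heights lie in a bounded window form an $RN$-subcomplex of $F_\bullet$ (since $N \leqslant \ker\chi$ acts without shifting $\chi$-height), and the uniform essential-acyclicity bounds from both sides ensure this subcomplex is acyclic, providing the desired $RN$-resolution of $M$ with finitely generated $n$-skeleton.

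For $r > 1$ one reduces to the rank-$1$ case: choose a character $\chi \colon G \to \Z$ with $\chi(N) = 0$ and let $K = \ker\chi$, so that $N \leqslant K$, $G/K \cong \Z$, and $K/N$ has rank $r - 1$. The rank-$1$ case applied to the pair $(G, K)$ yields $M \in \FP_n(RK)$, after which one invokes the inductive hypothesis on $(K, N)$ inside $K$, having first verified that $\Sigma^n_R(K;M) \supseteq S(K,N)$ via a restriction lemma for $\Sigma$-invariants under passage to the kernel of a character. The main obstacle throughout is the rank-$1$ patching construction; since it uses only valuations, supports, and essential acyclicity --- all defined for an arbitrary coefficient ring --- the generalization from $\Z$ to $R$ is automatic.
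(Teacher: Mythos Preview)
The paper does not actually prove this theorem: it simply cites \cite[Theorem 5.1]{BieriRenzValutations} and remarks that the proof goes through unchanged with $\Z$ replaced by $R$. Your proposal takes the same line but goes further, supplying a sketch of the Bieri--Renz argument itself; the inductive scheme on the rank of $G/N$, with the rank-$1$ patching of the two valuation subcomplexes as the core step, is indeed the structure of their proof, and you correctly observe that nothing in it depends on arithmetic in $\Z$.

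One small caution on your forward direction: the phrase ``promotes a partial finitely generated $RN$-free resolution of $M$ to a partial finitely generated $RG_\chi$-free resolution'' is doing real work and is not quite a standard change-of-rings (freeness of $RG_\chi$ over $RN$ goes the wrong way for the na\"ive argument). In Bieri--Renz this direction is also handled via the valuation/essential-acyclicity machinery rather than a bare base-change, so if you ever write this out in full you should follow their route there as well.
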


We will also need the following result due to Kielak and Jaikin-Zapirain. Kielak first proved the result in \cite[Theorem 5.2]{KielakRFRS} by giving an explicit construction of the Linnell skew-field $\mathcal D(G)$ when $G$ is RFRS. In the appendix to \cite{JaikinZapirain2020THEUO}, Jaikin-Zapirain showed that when $G$ is RFRS and $\mathbb F$ is any skew-field, then $\mathcal D_{\mathbb F G}$ exists and admits a completely analogous construction to $\mathcal D(G)$ (in fact $\mathcal D(G) = \mathcal D_{\Q G}$ for a RFRS group $G$). As a consequence, Kielak's proof of \cref{thm:KJZ} still holds after making the replacements $\Q \rightsquigarrow \mathbb F$ and $\mathcal D(G) \rightsquigarrow \mathcal D_{\mathbb FG}$.

\begin{thm}[Kielak, Jaikin-Zapirain] \label{thm:KJZ}
Let $\mathbb F$ be a skew-field, $G$ a finitely generated RFRS group, and $n \in \N$. Let $F_\bullet$ be a chain complex of free $\mathbb F G$-modules with $F_p$ is finitely generated and $H_p(\mathcal D_{\mathbb F G} \otimes_{\mathbb F G} F_\bullet) = 0$ for all $p \leqslant N$. Then, there exist a finite index subgroup $H \leqslant G$ and an open subset $U \subseteq S(H)$ such that
\begin{enumerate}
    \item the closure of $U$ contains $S(G)$;
    \item $U$ is invariant under the antipodal map;
    \item $H_p(\widehat{\mathbb F H}^\chi \otimes_{\mathbb F H} F_\bullet) = 0$ for every $p \leqslant n$ and every $[\chi] \in U$.
\end{enumerate}
\end{thm}

We are now ready to prove the main theorem.

\begin{thm}\label{thm:agrarianMain}
Let $\mathbb{F}$ be a skew-field and let $G$ be a virtually RFRS group of type $\mathtt{FP}_n(\mathbb{F})$. Then there is a finite index subgroup $H \leqslant G$ admitting a homomorphism onto $\Z$ with kernel of type $\mathtt{FP}_n(\mathbb{F})$ if and only if $b_p^{\mathcal{D}_{\mathbb{F}G}}(G) = 0$ for $p = 0, \dots, n$.
\end{thm}
\begin{proof}
($\Rightarrow$) Let $\varphi \colon H \rightarrow \Z$ be an epimorphism with kernel $K \in \mathtt{FP}_n(\mathbb{F})$. Then there is a free resolution $F_\bullet \rightarrow \mathbb{F} \rightarrow 0$ of the trivial $\mathbb{F}K$-module $\mathbb{F}$ with finitely generated $n$-skeleton. Therefore, 
\[
    b_p^{\mathcal{D}_{\mathbb{F}K}}(K) = \dim_{\mathcal{D}_{\mathbb{F}K}} H_p (\mathcal{D}_{\mathbb{F}K} \otimes_{\mathbb{F}K} F_\bullet) < \infty
\]
for $p \leqslant n$ and we have a short exact sequence $1 \rightarrow K \rightarrow H \rightarrow \Z \rightarrow 1$, so $b_p^{\mathcal{D}_{\mathbb{F}H}}(H) = 0$ for $p \leqslant n$ by \cref{thm:JBSES}. Then $b_p^{\mathcal{D}_{\mathbb{F}G}}(G) = 0$ for $p \leqslant n$ by \cref{lem:JBscales}.

\smallskip

($\Leftarrow$) The properties of being of type $\mathtt{FP}_n(\mathbb{F})$ and of having vanishing $p$th $\mathcal{D}_{\mathbb{F}G}$-Betti number pass to finite index subgroups. Moreover, the property of being virtually fibred passes to finite index overgroups (the same is of course true of any virtual property). Hence, we may assume that $G$ is RFRS and of type $\mathtt{FP}_n(\mathbb{F})$. Then $H^1(G; \R) \neq 0$ by \cref{prop:RFRStoQ}.

Since $G \in \mathtt{FP}_n(\mathbb{F})$, there is a free resolution $F_\bullet \rightarrow \mathbb{F} \rightarrow 0$ of the trivial $\mathbb{F}G$-module $\mathbb{F}$ with $F_p$ finitely generated for $p \leqslant n$. By assumption, $H_p(\mathcal{D}_{\mathbb{F}G} \otimes_{\mathbb{F}G} F_\bullet) = 0$ for $p \leqslant n$. By \cref{thm:KJZ}, there is a finite index subgroup $H \leqslant G$ and an open subset $U \subseteq H^1(H;\R)$ such that the closure of $U$ contains $H^1(G;\R)$, is invariant under nonzero scalar multiplication, and $H_p(\widehat{\mathbb{F}H}^\varphi \otimes_{\mathbb{F}H} F_\bullet) = 0$ for $p \leqslant n$ and all $\phi \in U$. Since $U$ is nonempty, we can find a surjective character $\varphi \colon H \rightarrow \Z$ in $U$. To see this, let $\varphi \in H^1(G;\R)$ be a nontrivial character. Since $U$ is open, $\varphi$ can be perturbed so that its image is in $\Q$. Finally, since $H$ is finitely generated, we can rescale the character so that it maps $H$ onto $\Z$.  Then $[\pm \varphi] \in \Sigma_\mathbb{F}^n(H;\mathbb{F})$ by \cref{thm:Main}. It is not hard to show that $[\chi] = [\pm \varphi]$ for any character $\chi \colon H \rightarrow \R$ with $\ker \varphi \subseteq \ker \chi$. Thus, $\ker \varphi \in \mathtt{FP}_n(\mathbb{F})$ by \cref{thm:genBR}. \qedhere

\end{proof}

\begin{cor}\label{cor:typeFP}
    Let $G$ be virtually RFRS and of type $\FP(\mathbb{F})$. Then $G$ virtually algebraically fibres with kernel of type $\mathtt{FP}(\mathbb F)$ if and only if $G$ is $\DF{G}$-acyclic.
\end{cor}

\begin{proof}
    One direction is clear. If $G$ is $\DF{G}$-acyclic, then $G$ virtually algebraically fibres with kernel $K$ of type $\FP_n(\mathbb F)$ for $n > \cd_\mathbb F(G)$. But then $n > \cd_\mathbb F (K)$, so $K$ is of type $\FP(\mathbb F)$. \qedhere
\end{proof}

We now apply \cref{thm:agrarianMain} to the case $\mathbb{F} = \mathbb{Q}$.

\begin{defn}[$\ell^2$-Betti numbers] \label{def:l2b}
Let $G$ be a torsion-free group satisfying the Atiyah conjecture and let $\mathcal{D}(G)$ be the Linnell skew-field of $G$ (see the following remark). Define
\[
    b_p^{(2)}(G) = \dim_{\mathcal{D}(G)} \Tor_p^{\Q G} (\mathcal{D}(G), \Q)
\]
to be the \textit{$p$th $\ell^2$-Betti number of $G$}.

If a group $G$ has a torsion-free subgroup $H$ of finite index satisfying the Atiyah conjecture, we extend the definition of $\ell^2$-Betti numbers by declaring that
\[
    b_n^{(2)}(G) = \frac{b_n^{(2)}(H)}{[G:H]}.
\]
\end{defn}

\begin{rem}
This definition of $\ell^2$-Betti numbers for torsion-free groups satisfying the Atiyah conjecture agrees with the usual definition by \cite[Lemma 10.28(3)]{Luck02}. Moreover, $\ell^2$-Betti numbers for virtually torsion-free groups are well-defined and coincide with the usual definition by \cite[Theorem 6.54(6)]{Luck02}. We will not give the definition of the Linnell ring $\mathcal{D}(G)$ since that would take us too far afield. We take the Atiyah conjecture to be the statement that $\mathcal{D}(G)$ is a skew-field, which allows us to make \cref{def:l2b}. Linnell showed that this formulation implies the strong Atiyah conjecture over $\Q$ for torsion-free groups \cite{LinnellDivRings93}. The details of the reverse implication can be found in L\"uck's book \cite[Section 10]{Luck02}.
\end{rem}

In the case where $G$ is finitely generated and RFRS, $\mathcal{D}_{\Q G}$ exists and is isomorphic to the Linnell skew-field $\mathcal{D}(G)$ by Jaikin-Zapirain's appendix to \cite{JaikinZapirain2020THEUO}. Then, the $\ell^2$-Betti numbers of $G$ are defined and $b_p^{(2)}(G) = b_p^{\mathcal{D}_{\Q G}}(G)$. Hence, applying \cref{thm:agrarianMain} to the case $\mathbb{F} = \Q$ yields the following result stated in the introduction.

\begin{thm}\label{thm:b2rfrs}
Let $G$ be a virtually RFRS group of type $\mathtt{FP}_n(\Q)$. Then there is a finite index subgroup $H \leqslant G$ admitting a homomorphism onto $\Z$ with kernel of type $\mathtt{FP}_n(\Q)$ if and only if $b_p^{(2)}(G) = 0$ for $p = 0, \dots, n$.
\end{thm}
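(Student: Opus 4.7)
The plan is to derive Theorem \ref{thm:b2rfrs} as a direct specialization of Theorem \ref{thm:JaikinVersionOfMain} to the skew-field $\mathbb{F} = \Q$. The only new content needed is the verification that under the RFRS hypothesis the $\mathcal{D}_{\Q G}$-Betti numbers appearing in Theorem \ref{thm:JaikinVersionOfMain} coincide with the classical $\ell^2$-Betti numbers of \cref{def:l2b}; once this is in hand, Theorem \ref{thm:b2rfrs} reads word-for-word as Theorem \ref{thm:JaikinVersionOfMain}.

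First I would handle the ``virtually'' in the hypothesis. Since $G$ is virtually RFRS of type $\FP_n(\Q)$, choose a finite index subgroup $G_0 \leqslant G$ that is RFRS; then $G_0$ is also of type $\FP_n(\Q)$, and it is finitely generated because $G \in \FP_n(\Q)$ implies $G$ is finitely generated for $n \geqslant 1$ (and the $n=0$ case is vacuous). The conclusion ``there exists a finite index subgroup $H \leqslant G$ fibering with $\FP_n(\Q)$ kernel'' is equivalent to the same statement for $G_0$ in place of $G$. On the Betti number side, the scaling formula for $\ell^2$-Betti numbers ($b_p^{(2)}(G_0) = [G : G_0] \cdot b_p^{(2)}(G)$) together with the analogous \cref{lem:JBscales} for $\mathcal{D}_{\mathbb{F}G}$-Betti numbers shows that $b_p^{(2)}(G) = 0$ if and only if $b_p^{(2)}(G_0) = 0$, and similarly for $b_p^{\mathcal{D}_{\Q G}}$. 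Hence I may assume $G$ itself is RFRS.

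Next I would invoke Jaikin-Zapirain's appendix to \cite{JaikinZapirain2020THEUO}, which says that for a finitely generated RFRS group $G$ the Hughes-free division ring $\mathcal{D}_{\Q G}$ exists and is isomorphic, as a $\Q G$-algebra, to the Linnell skew-field $\mathcal{D}(G)$. Because the isomorphism is one of $\Q G$-algebras, it identifies the two $\Tor$-computations:
\[
H_p^{\mathcal{D}_{\Q G}}(G, \Q) = \Tor_p^{\Q G}(\Q, \mathcal{D}_{\Q G}) \cong \Tor_p^{\Q G}(\Q, \mathcal{D}(G)),
\]
and therefore $b_p^{\mathcal{D}_{\Q G}}(G) = b_p^{(2)}(G)$ for every $p$, where the right-hand side is defined because RFRS groups are torsion-free and satisfy the Atiyah conjecture.

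With this identification in place, Theorem \ref{thm:b2rfrs} is exactly the statement of Theorem \ref{thm:JaikinVersionOfMain} for $\mathbb{F} = \Q$, so the proof concludes by citation. The main obstacle has been shouldered by Theorem \ref{thm:JaikinVersionOfMain} itself (and hence by the $\Sigma$-invariant machinery of \cref{sec:SigInv}, the homological lemmas \cref{lem:JBscales,lem:JBSES}, and Kielak's RFRS polytope theorem from \cite{KielakRFRS} together with its extension via Jaikin-Zapirain's appendix); the only subtlety in the specialization is the coefficient-ring identification in the previous paragraph, and the bookkeeping required to reduce from the virtually RFRS setting to the RFRS setting.
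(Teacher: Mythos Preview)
Your proposal is correct and follows essentially the same route as the paper: identify $\mathcal{D}_{\Q G}$ with the Linnell skew-field $\mathcal{D}(G)$ via Jaikin-Zapirain's appendix, conclude $b_p^{\mathcal{D}_{\Q G}}(G)=b_p^{(2)}(G)$, and then read off \cref{thm:b2rfrs} as the $\mathbb{F}=\Q$ case of \cref{thm:JaikinVersionOfMain}. The only cosmetic difference is that you explicitly pass to a RFRS finite-index subgroup first, whereas the paper simply invokes \cref{thm:JaikinVersionOfMain} directly for virtually RFRS $G$ (using the extended definitions of both $b_p^{(2)}$ and $b_p^{\mathcal{D}_{\Q G}}$ for groups that are only virtually RFRS); either way the bookkeeping is routine.
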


Thanks to Jaikin-Zapirain's work on rank functions in \cite{JaikinZapirain2020THEUO}, we can give a third characterisation of algebraic fibring. First, we set up some notation and terminology. If $R$ is a ring and $\varphi \colon R \rightarrow \mathcal D$ is a division $R$-ring, then there is a natural rank function on matrices over $R$, which we denote $\rk_{\mathcal D, \varphi}$ or simply $\rk_{\mathcal D}$ when the map $\varphi$ is understood. If $\varphi$ is \textit{epic}, meaning that $\varphi(R)$ generates $\mathcal D$ as a division ring, then we call $\mathcal D$ \textit{universal} if the $\rk_\mathcal D \geqslant \rk_\mathcal E$ for every division $R$-ring $\psi \colon R \rightarrow \mathcal E$. Note that if a universal division $R$-ring exists, then it is unique up to $R$-isomorphism by a result of Cohn \cite[Theorem 4.4.1]{cohn1995skew}. If, additionally, $\varphi$ is an injection, then we call $\mathcal D$ the \textit{universal division ring of fractions} for $R$.

\begin{thm}[Jaikin-Zapirain, {\cite[Corollary 1.3]{JaikinZapirain2020THEUO}}]  \label{thm:jaikinUniv}
    If $G$ is a residually-(locally indicable and amenable) group and $\mathbb F$ is a skew-field, then the Hughes-free division ring $\DF{G}$ exists and is the universal division ring of fractions for $\mathbb FG$.
\end{thm}

Following Jaikin-Zapirain, whenever $G$ has a Hughes-free division ring $\DF{G}$, we denote the rank function $\rk_{\DF{G}}$ by $\rk_{\mathbb FG}$. Note, in particular, that \cref{thm:jaikinUniv} holds for RFRS groups since they are residually poly-$\Z$ (see, e.g., \cite[Proposition 4.4]{JaikinZapirain2020THEUO}). Therefore, we have the following corollary, which will be useful to us below.

\begin{cor}\label{cor:rkIneq}
    Let $G$ be a RFRS group and let $\varphi \colon G \rightarrow \Z$ be a homomorphism. Let $A$ be a matrix over $\mathbb FG$ and let $A^\Z$ be the matrix over $\mathbb F[\Z]$ obtained by applying $\varphi$ to $A$. Then $\rk_{\mathbb FG} A \geqslant \rk_{\mathbb F[\Z]} A^\Z$.
\end{cor}

\begin{proof}
    There is a map $\overline{\varphi} \colon \mathbb F G \rightarrow \mathbb F[\Z] \hookrightarrow \DF{[\Z]}$ induced by $\varphi$ and therefore $\rk_{\mathbb FG} A \geqslant \rk_{\DF{[\Z]}, \overline{\varphi}} A = \rk_{\mathbb F[\Z]} A^\Z$ by universality of $\DF{G}$. \qedhere
\end{proof}

The following theorem generalises Corollary 1.5 of \cite{JaikinZapirain2020THEUO}, where the result is proven for $n = 1$. In the proof, if $M$ is a finitely generated $\mathbb F[\Z]$-module, then we define $\dim M := \dim_{\DF{[\Z]}} (M \otimes_{\mathbb F[\Z]} \DF{[\Z]})$. We will also write $\rk_G$ instead of $\rk_{\mathbb FG}$ to lighten the notation.

\begin{thm}\label{thm:finiteBetti}
    Let $\mathbb F$ be a skew-field and let $G$ be a virtually RFRS group of type $\FP_n(\mathbb F)$. The following are equivalent:
    \begin{enumerate}[label=(\arabic*)]
        \item\label{item:FP} there is a finite index subgroup $H_0 \leqslant G$ and a surjection $\varphi_0 \colon H_0 \rightarrow \Z$ with $\ker \varphi_0$ of type $\FP_n(\mathbb F)$;
        \item\label{item:Betti} there is a finite index subgroup $H_1 \leqslant G$ and a surjection $\varphi_1 \colon H_1 \rightarrow \Z$ with $b_p(\ker \varphi_1; \mathbb F) < \infty$ for $p = 0, \dots, n$.
    \end{enumerate}
\end{thm}

\begin{proof}
    If $G$ algebraically fibres with kernel $K$ of type $\FP_n(\mathbb F)$, then there is a free resolution $F_\bullet \rightarrow \mathbb F \rightarrow 0$ of the trivial $\mathbb F K$ module $\mathbb F$ such that $F_p$ is finitely generated for all $p \leqslant n$. This resolution can be used to compute the homology of $K$, and therefore $b_p(K; \mathbb F) < \infty$ for all $p \leqslant n$.
    
    In view of \cref{thm:agrarianMain}, to prove the converse it suffices to show that $b_p^{\DF{G}}(G) = 0$ for all $p \leqslant n$. By multiplicativity of $\DF{G}$-Betti numbers (\cref{lem:JBscales}), we may assume that $H_1 = G$. Let $K = \ker \varphi_1$ and write $\mathbb F[\Z]$ for the group algebra $\mathbb F[G/K]$. Moreover, note that $H_p(K;\mathbb F) \cong H_p(G; \mathbb F[\Z])$ for all $p$. Let
    \[
        \cdots \rightarrow \mathbb F G^{d_p} \rightarrow \cdots \rightarrow \mathbb F G^{d_0} \rightarrow \mathbb F \rightarrow 0
    \]
    be a free resolution of the trivial $\mathbb F G$-module $\mathbb F$, where $d_p$ is some cardinal for each $p$ and $d_p$ is finite for each $p \leqslant n$, and we use the (non-standard) notation $\mathbb F G^{d_p}$ to denote the $d_p$-fold direct sum of $\mathbb F G$'s, as opposed to the $d_p$-fold direct product. The quotient map $G \rightarrow \Z$ induces a chain map
    \[
        \begin{tikzcd}
        \cdots \arrow[r, "\partial_{n+2}"]    & \mathbb F G^{d_{n+1}} \arrow[r, "\partial_{n+1}"] \arrow[d] & \mathbb F G^{d_n} \arrow[r, "\partial_n"] \arrow[d] & \cdots \arrow[r, "\partial_1"]    & \mathbb F G^{d_0} \arrow[r, "\partial_0"] \arrow[d] & 0 \\
        \cdots \arrow[r, "\partial_{n+2}^\Z"] & {\mathbb F[\Z]^{d_{n+1}}} \arrow[r, "\partial_{n+1}^\Z"]        & \mathbb F[\Z]^{d_n} \arrow[r, "\partial_n^\Z"]    & \cdots \arrow[r, "\partial_1^\Z"] & {\mathbb F[\Z]^{d_0}} \arrow[r, "\partial_0^\Z"]    & 0\nospacepunct{,}
    \end{tikzcd}
    \]
    where the boundary maps are viewed as matrices and $\partial_p^\Z$ is obtained by applying the map $G \rightarrow \Z$ to each entry of the matrix $\partial_p$. Note that the homology of the bottom chain complex is $H_\bullet(G;\Q[\Z])$.
    
    To apply Jaikin-Zapirain's results on rank functions, we will need the boundary maps to be between finitely generated free modules. However, $d_{n+1}$ is not finite in general, so we must modify the chain complexes as follows. Since $\mathbb F[\Z]$ is Noetherian and $\mathbb F[\Z]^{d_n}$ is finitely generated, $\im \partial_{n+1}^\Z$ is a finitely generated submodule of $\mathbb F[\Z]^{d_n}$. The preimage of a finite generating set of $\im \partial_{n+1}^\Z$ is contained in a finitely generated free summand $F$ of $\mathbb F[\Z]^{d_{n+1}}$. Notice that the homology of 
    \[    
        F \rightarrow \mathbb F[\Z]^{d_n} \rightarrow \cdots \rightarrow \mathbb F[\Z]^{d_0} \rightarrow 0
    \]
    is still $H_p(G; \mathbb F[\Z])$ for $p \leqslant n$. The preimage of $F$ in $\mathbb F G^{d_{n+1}}$ is again a finitely generated free summand $\widehat{F}$ of $\mathbb F G^{d_{n+1}}$. Note that it suffices to show that the homology of
    \[
        \DF{G} \otimes_{\Q G} \widehat{F} \rightarrow \DF{G} \otimes_{\mathbb F G} \mathbb F G^{d_n} \rightarrow \cdots \rightarrow \DF{G} \otimes_{\mathbb F G} \mathbb F G^{d_0} \rightarrow 0
    \]
    vanishes in degrees $\leqslant n$ to show that $b_p^{\DF{G}}(G) = 0$ for all $p \leqslant n$.

    We assume that $d_{n+1}$ is finite and that $F = \mathbb F[\Z]^{d_{n+1}}$ and $\widehat{F} = \mathbb F G^{d_{n+1}}$. Since, for every $p \leqslant n$, the homology $H_p(G;\mathbb F[\Z])$ is finite-dimensional as an $\mathbb F$-vector space, it must be torsion as an $\mathbb F[\Z]$-module. Therefore, $\rk_\Z \partial_{p+1}^\Z = \dim \ker \partial_p^\Z$ for every $p \leqslant n$. Now, for each $p \leqslant n$, we have short exact sequences
    \[
        0 \rightarrow \ker \partial_p^\Z \rightarrow \mathbb F G^{d_p} \rightarrow \im \partial_p^\Z \rightarrow 0
    \]
    which implies that $d_p = \dim \ker \partial_p^\Z + \rk_\Z \partial_p^\Z = \rk_\Z \partial_{p+1}^\Z + \rk_\Z \partial_p^\Z$. Hence,
    \begin{align*}
        d_p - \rk_G \partial_p &= \dim_{\DF{G}} \ker(\DF{G}^{d_p} \xrightarrow{\partial_p} \DF{G}^{d_{p-1}} ) \\
        &\geqslant \rk_G \partial_{p+1} \\
        &\geqslant \rk_\Z \partial_{p+1}^\Z \\
        &= d_p - \rk_\Z \partial_p^\Z \\
        &\geqslant d_p - \rk_G \partial_p,
    \end{align*}
    where we have used \cref{cor:rkIneq}. Thus, 
    \[
        \rk_G \partial_{p+1} = \dim_{\DF{G}} \ker(\DF{G}^{d_p} \xrightarrow{\partial_p} \DF{G}^{d_{p-1}} ),
    \]
    and therefore $b_p^{\DF{G}}(G) = 0$ for all $p \leqslant n$. \qedhere
\end{proof}

\begin{cor}\label{cor:charac}
    Let $G$ be a virtually RFRS group and let $n \in \N$.
    \begin{enumerate}[label=(\arabic*)]
        \item\label{item:same_char} If $\mathbb F$ and $\mathbb F'$ are skew-fields of the same characteristic, then $G$ virtually algebraically fibres with kernel of type $\FP_n(\mathbb F)$ if and only if it virtually algebraically fibres with kernel of type $\FP_n(\mathbb F')$.
        \item\label{item:p_vs_q} If $p$ is a prime such that $G$ algebraically fibres with kernel of type $\FP_n(\mathbb F_p)$, then it fibres with kernel of type $\FP_n(\Q)$.
    \end{enumerate}
\end{cor}

\begin{proof}
    \cref{item:same_char} follows from the fact that the Betti numbers of a group with trivial skew-field coefficients depend only on the characteristic of the skew-field. \cref{item:p_vs_q} follows from the fact that $b_k(G; \mathbb F_p) \geqslant b_k(G; \Q)$ for any group $G$ and any prime $p$ (this is a consequence of the universal coefficient theorem). \qedhere
\end{proof}

\section{Applications} \label{sec:app}

\subsection{Amenable RFRS groups}

\begin{defn}
A group $G$ is \textit{amenable} if for every continuous $G$-action on a compact, Hausdorff space $X$, there is a $G$-invariant probability measure on $X$.
\end{defn}

\begin{defn}[elementary amenable groups] \label{def:elemAm}
The class $\mathcal{E}$ of \textit{elementary amenable} groups is the smallest class such that 
\begin{enumerate}[label = {$\bullet$}]
    \item $\mathcal{E}$ contains all finite groups and all Abelian groups;
    \item if $G \in \mathcal{E}$ then the entire isomorphism class of $G$ is contained in $\mathcal{E}$;
    \item $\mathcal{E}$ is closed under taking subgroups, quotients, extensions, and directed unions.
\end{enumerate}
\end{defn}

All elementary amenable groups are amenable, however there are many examples of amenable groups that are not elementary amenable, the earliest being Grigorchuk's group of intermediate growth \cite{GrigorchukGroup}. For a discussion of more examples, we refer the reader to the introduction of Juschenko's paper \cite{JuschenckoNEA}. The known examples of non-elementary amenable groups all have infinite cohomological dimension over any field.  Moreover, elementary amenable groups of finite cohomological dimension over $\Z$ are virtually solvable by \cite[Lemma 2]{Hillman91} and \cite[Corollary 1]{HillmanLinnell}. We are led to the following question, which was stated in the introduction. 

\begin{q}
Are amenable groups of finite cohomological dimension over $\Z$ virtually solvable?
\end{q}

We obtain a partial answer in the positive direction as an application of \cref{thm:b2rfrs}, and extends the well-known fact that nilpotent RFRS groups are Abelian. The author thanks Sami Douba for pointing out the fact (and the proof) that polycyclic RFRS groups are virtually Abelian, which allows us to strengthen the following theorem.

\begin{thm}\label{thm:amRFRSelemAm}
    If $G$ is an amenable RFRS group of type $\mathtt{FP}(\Q)$, then $G$ is virtually Abelian.
\end{thm}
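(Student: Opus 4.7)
The plan is to induct on the rational cohomological dimension $\cd_\Q(G)$, using \cref{thm:b2rfrs} to virtually fibre $G$ at each stage and to identify the kernel as a group of strictly smaller cohomological dimension to which the induction applies.

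After passing to a finite-index subgroup I may assume $G$ itself is RFRS and amenable, since both properties pass to subgroups. The base case $\cd_\Q(G) = 0$ forces $G$ finite, which is trivially polycyclic-by-finite. For the inductive step, $G$ is infinite amenable, hence $b_p^{(2)}(G) = 0$ for all $p \geqslant 0$ by the classical vanishing of $\ell^2$-Betti numbers for infinite amenable groups. Since $G \in \FP(\Q) \subseteq \FP_n(\Q)$ for every $n$, \cref{thm:b2rfrs} produces a finite-index subgroup $H \leqslant G$ and a short exact sequence
\[
    1 \to K \to H \to \Z \to 1
\]
with $K$ of type $\FP(\Q)$.

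Next I would verify that $\cd_\Q(K) = \cd_\Q(H) - 1$. Setting $n := \cd_\Q(K)$, the $\FP(\Q)$ hypothesis on $K$ implies $H^n(K; \Q K) \neq 0$ and allows $H^q(K;-)$ to commute with direct sums, so $H^q(K; \Q H) \cong \bigoplus_{\Z} H^q(K; \Q K)$ as $\Z$-modules with the shift action. The Lyndon--Hochschild--Serre spectral sequence for the extension with coefficients in $\Q H$ collapses (since $\cd_\Q(\Z) = 1$) and yields $H^{n+1}(H; \Q H) \cong H^1(\Z; H^n(K; \Q H)) \cong H^n(K; \Q K) \neq 0$, using the standard identification $H^1(\Z; \bigoplus_{\Z} W) \cong W$ for the shift action. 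Combined with $\cd_\Q(H) \leqslant \cd_\Q(K) + \cd_\Q(\Z) = n+1$, this gives $\cd_\Q(H) = n+1$ and hence $\cd_\Q(K) < \cd_\Q(G)$.

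Now $K$ is a subgroup of $G$, so it is RFRS and amenable, and it is of type $\FP(\Q)$ with strictly smaller $\cd_\Q$; by induction $K$ is polycyclic-by-finite. To finish, I would use the standard fact that an extension of a polycyclic-by-finite group by $\Z$ is polycyclic-by-finite: choose a characteristic finite-index polycyclic subgroup $K_0 \leqslant K$ (possible because $K$, being of type $\FP_1(\Q)$, is finitely generated and hence has only finitely many subgroups of any given index); then $K_0 \triangleleft H$, the quotient $H/K_0$ is a finite extension of $\Z$ and therefore virtually cyclic, and the preimage in $H$ of any polycyclic finite-index subgroup of $H/K_0$ is a polycyclic finite-index subgroup of $H$. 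Consequently $H$, and therefore $G$, is polycyclic-by-finite.

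The main obstacle is the dimension-drop step in the third paragraph: it relies essentially on the kernel of the virtual fibration being of type $\FP(\Q)$ rather than merely $\FP_n(\Q)$, which is precisely the strengthening of Kielak's theorem furnished by \cref{thm:b2rfrs}; without this, the induction on $\cd_\Q$ could not be set up.
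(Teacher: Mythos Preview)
Your argument is correct and follows essentially the same route as the paper: induct (equivalently, iterate) on $\cd_\Q$, use vanishing of $\ell^2$-Betti numbers for amenable groups together with \cref{thm:b2rfrs} to virtually fibre, drop the dimension by one on the kernel, and conclude. The paper cites Fel'dman's theorem for the dimension drop where you unpack the Lyndon--Hochschild--Serre argument by hand; both are fine.

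One point to tighten: the sentence ``Since $G \in \FP(\Q) \subseteq \FP_n(\Q)$ for every $n$, \cref{thm:b2rfrs} produces \ldots\ with $K$ of type $\FP(\Q)$'' is not quite right as written, because the finite-index subgroup $H$ and kernel $K$ furnished by \cref{thm:b2rfrs} could in principle depend on $n$. What you should do (and what the paper does) is apply \cref{thm:b2rfrs} once at $n = \cd_\Q(G)$ to obtain $K \in \FP_n(\Q)$, and then observe that $\cd_\Q(K) \leqslant \cd_\Q(G) = n$ forces $K \in \FP(\Q)$.
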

\begin{proof}
    Since $G$ is amenable, $b_p^{\mathcal D_{\Q G}}(G) = 0$ for all $p$ by \cite[Theorem 7.2(1)]{Luck02}. By \cref{thm:b2rfrs} we obtain a finite index subgroup $H \leqslant G$ and an epimorphism $\varphi \colon H \rightarrow \Z$ such that $N = \ker \varphi \in \mathtt{FP}_n(\Q)$. It will be necessary to require that $H$ be normal in $G$, which is not an issue since we can replace $H$ with its normal core. Since $\cd_{\Q} N \leqslant \cd_\Q G \leqslant n$, we also have $N \in \mathtt{FP}(\Q)$ \cite[Proposition VIII.6.1]{BrownGroupCohomology}. Because we have a short exact sequence
    \[
        1 \rightarrow N \rightarrow H \rightarrow \Z \rightarrow 1
    \]
    with $N \in \mathtt{FP}(\Q)$, a theorem of Fel'dman \cite[Theorem 2.4]{Feldman71} (see also \cite[Proposition 2.5]{Bieri76}) gives $\cd_\Q N = \cd_\Q H - \cd_\Q \Z = n - 1$.

    Since subgroups of amenable RFRS groups are amenable and RFRS, we can repeat the argument above with $N$ instead of $G$. Iterating this process, we obtain a subnormal series
    \[
        G_0 \trianglelefteqslant G_1 \trianglelefteqslant \cdots \trianglelefteqslant G_{n-1} \trianglelefteqslant G_n = G
    \]
    of $G$ such that $\cd_\Q G_j = j$ for each $j$ (note that $N = G_{n-1}$ here). But the only torsion-free group of cohomological dimension $0$ over $\Q$ is the trivial group, so we conclude that $G$ is polycyclic-by-finite.

    Let $G' \leqslant G$ be a polycyclic group of finite index. We will show, by induction on the Hirsch length of $G'$, that $G'$ must be virtually Abelian. Indeed, by induction we may assume that $G' \cong \Z^m \rtimes_\psi \Z$. If $\psi$ is of finite order, then $G'$ is virtually Abelian. If $\psi$ is of infinite order, then the argument of \cite[Proposition 4.17]{BridsonHaefliger_thebook} shows that $\Z^m$ cannot be quasi-isometrically embedded in $G'$. However, \cref{prop:RFRStoQ}\ref{item:v_retracts} implies that all Abelian subgroups of a finitely generated RFRS group are undistorted, a contradiction. \qedhere
\end{proof}

\begin{rem}
    The assumption that $G$ be RFRS is necessary. For example, the Baumslag--Solitar group $\BS(1,n)$, with $n > 1$, is locally indicable, solvable, and of finite type, but it is not virtually Abelian (nor is it polycyclic-by-finite). The assumption that $G$ be (virtually) of type $\FP(\Q)$ is also necessary; for example $\Z \wr \Z$ is RFRS and amenable, yet it is not virtually Abelian.
\end{rem}

Baer's conjecture states that if $G$ is a group with a Noetherian group ring $\Z G$, then $G$ is a polycyclic-by-finite group. The author is grateful to Sam Hughes for pointing out the following consequence of \cref{thm:amRFRSelemAm}, which resolves a special case of Baer's conjecture.

\begin{cor}\label{cor:baer}
    Let $G$ be a virtually RFRS group of type $\mathtt{FP}(\Q)$. If $\Z G$ is Noetherian, then $G$ is virtually Abelian.
\end{cor}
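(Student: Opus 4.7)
The plan is to reduce the corollary to \cref{thm:amRFRSelemAm} by producing amenability from the Noetherian hypothesis. The Noetherian condition on $\mathbb{Z}G$ has two well-known consequences that the introduction already advertises: by the Kropholler--Lorensen theorem (or, via the reference to Kielak's appendix to \cite{BartholdiKielakApp}), if $\mathbb{Z}G$ is right Noetherian then $G$ is amenable (and every subgroup of $G$ is finitely generated). Thus the Noetherian assumption supplies the amenability input needed by \cref{thm:amRFRSelemAm}, while the RFRS and $\FP(\mathbb{Q})$ hypotheses are assumed directly.

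To apply \cref{thm:amRFRSelemAm} verbatim I first pass to an honest RFRS subgroup. Let $H \leqslant G$ be a finite-index RFRS subgroup (guaranteed by the hypothesis that $G$ is virtually RFRS); after replacing $H$ by its normal core I may assume $H \triangleleft G$. Then $H$ is amenable (a subgroup of the amenable group $G$) and RFRS. Moreover $H$ inherits type $\FP(\mathbb{Q})$ from $G$: indeed, type $\FP_n$ over any ring, and finite cohomological dimension over $\mathbb{Q}$, both pass from a group to any finite-index subgroup (standard facts; see, e.g., \cite[VIII.5.1]{BrownGroupCohomology}), so $H \in \FP(\mathbb{Q})$.

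Now \cref{thm:amRFRSelemAm} applies to $H$ and gives that $H$ is polycyclic-by-finite. Since $[G:H] < \infty$, the group $G$ is virtually polycyclic-by-finite, and the class of polycyclic-by-finite groups is closed under finite extensions, so $G$ itself is polycyclic-by-finite.

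The only real content is the implication ``$\mathbb{Z}G$ Noetherian $\Rightarrow$ $G$ amenable,'' which is imported as a black box from \cite{KrophollerLorensen19} (or from Kielak's appendix to \cite{BartholdiKielakApp}); no obstacle arises on our side. Everything else is a routine combination of finite-index bookkeeping with the main theorem \cref{thm:amRFRSelemAm}, so the proof is essentially a one-line deduction once the amenability input is in hand.
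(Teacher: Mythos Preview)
Your argument is correct and follows the same overall strategy as the paper: extract amenability from the Noetherian hypothesis and then apply \cref{thm:amRFRSelemAm}. The only difference is in how amenability is obtained. You invoke Kropholler--Lorensen \cite[Corollary B]{KrophollerLorensen19} as a black box, which indeed yields amenability of $G$ directly from $\mathbb{Z}G$ Noetherian, without using the RFRS hypothesis at that stage. The paper instead first passes to a RFRS subgroup, uses the embedding $\mathbb{Z}G \hookrightarrow \mathcal{D}(G)$ to see that $\mathbb{Z}G$ is a domain, then applies ``Noetherian domain $\Rightarrow$ Ore domain'' and finally Kielak's appendix to \cite{BartholdiKielakApp}, which says that $\mathbb{Q}G$ being an Ore domain forces $G$ to be amenable. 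Your route via \cite{KrophollerLorensen19} is slightly more direct; just note that Kielak's appendix by itself does not give ``$\mathbb{Z}G$ Noetherian $\Rightarrow$ $G$ amenable'' without the intermediate domain/Ore step, so your parenthetical alternative citation is not a standalone substitute.
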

\begin{proof}
    It is enough to prove the claim in the case that $G$ is RFRS. In this case, $\mathbb{Z} G$ embeds into the Linnell skew-field $\mathcal{D}(G)$, and therefore $\Z G$ is a domain. Since Noetherian domains are Ore domains \cite[p.~47]{McConnellRobsonNNR}, we have that $\Z G$ is an Ore domain. It then follows that $\Q G$ is an Ore domain, which implies that $G$ is amenable by Kielak's appendix to \cite{BartholdiKielakApp}. Then $G$ is polycyclic-by-finite by \cref{thm:amRFRSelemAm}.
\end{proof}

\subsection{Arithmetic lattices and subgroups of hyperbolic groups}

Recall that hyperbolic groups are always of type $\F_\infty$ (and of type $\F$ if they are torsion-free). However, their subgroups can exhibit many interesting finiteness properties. In \cite{RipsF1notF2_1982}, Rips gave the first example of an incoherent hyperbolic group; phrased in terms of finiteness properties, this gives an example of a hyperbolic group with a subgroup that is of type $\F_1$ but not $\F_2$. In \cite{BradyF2notF3_1999}, Brady constructed a type $\F_2$ subgroup of a hyperbolic group that is not of type $\F_3$; this provided the first example of a finitely presented non-hyperbolic subgroup of a hyperbolic group. In the same paper, Brady asked whether there are subgroups of hyperbolic groups that are of type $\F_n$ but not $\F_{n+1}$ for all $n$. More examples of $\F_2$-not-$\F_3$ subgroups of hyperbolic groups were provided by Kropholler \cite{KrophollerF2notF3} and Lodha \cite{LodhaF_2notF_3}, and in \cite{IsenrichPyMartelli_F3notF4}, Llosa Isenrich, Martelli, and Py constructed the first examples of $\F_3$-not-$\F_4$ subgroups of hyperbolic groups. This result was extended in a subsequent paper \cite{IsenrichPy2022}, where Llosa Isenrich and Py completely answer Brady's question by exhibiting cocompact hyperbolic arithmetic lattices in $\PU(n,1)$ with $\F_{n-1}$-not-$\F_n$ subgroups for all $n$. We also mention the related work of Italiano, Martelli, and Migliorini, who constructed the first example of a non-hyperbolic type $\F$ subgroup of a hyperbolic group \cite{IMM_5mfld}.

In \cite{IsenrichPyMartelli_F3notF4}, Llosa Isenrich, Martelli, and Py remark that one can use \cref{thm:b2rfrs} to show that there are subgroups of hyperbolic lattices in $\PO(2n,1)$ that are of type $\FP_{n-1}(\Q)$ but not $\FP_n(\Q)$. We record their argument here, and also mention that the same line of reasoning shows that there are many hyperbolic lattices in $\PO(2n+1,1)$ that virtually fibre with kernel of type $\FP(\Q)$.

\begin{prop}[{\cite[Proposition 19]{IsenrichPyMartelli_F3notF4}}]\label{prop:lattices}
    Let $\Gamma < \PO(n,1)$ be a cocompact cubulable lattice. If $n = 2k$ is even then $\Gamma$ virtually fibres with kernel of type $\FP_{k-1}(\Q)$ but not $\FP_k(\Q)$. If $n$ is odd, then $\Gamma$ virtually fibres with kernel of type $\FP(\Q)$.
\end{prop}

\begin{rem}
    In \cite{BHW_2011}, Bergeron, Haglund, and Wise show that any standard arithmetic subgroup of $\PO(n,1)$ is cubulated, so \cref{prop:lattices} applies to a nonempty class of groups. We refer the reader to their paper for the definition of standard.
\end{rem}

\begin{proof}
    By Agol's theorem \cite[Theorem 1.1]{AgolHaken}, $\Gamma$ is virtually special and in particular virtually RFRS. By, for instance, \cite[Theorem 3.3]{KammeyerLattices}, the $\ell^2$-Betti numbers of lattices in semisimple Lie groups vanish except in the middle dimension, where they are nonzero. If $n = 2k$, then $\Gamma$ virtually fibres with kernel of type $\FP_{k-1}(\Q)$, but $b_k^{(2)}(\Gamma) \neq 0$ so the kernel cannot be of type $\FP_n(\Q)$ by \cref{thm:agrarianMain}. If $n$ is odd, then $\Gamma$ is $\ell^2$-acyclic and therefore virtually fibres with kernel of type $\FP(\Q)$ in this case, where we have also used \cref{cor:typeFP}. \qedhere
\end{proof}

\section{Addendum: Baer's conjecture for locally indicable groups}

In \cref{cor:baer}, we showed that Baer's Conjecture holds for RFRS groups of type \(\FP(\Q)\). These assumptions are unnecessarily strong. Indeed, using elementary arguments, we show that Baer's Conjecture holds for virtually locally indicable groups. All modules are left modules by convention, and we say that a ring is Noetherian if all of its left ideals are finitely generated.

\begin{lem}
    Let \(G\) be a group and \(R\) be a ring such that \(RG\) is Noetherian. Then \(RH\) is Noetherian for every subgroup \(H \leqslant G\).
\end{lem}
\begin{proof}
    Let \(I \subseteq RH\) be an ideal and let \(I^G\) be the ideal of \(RG\) generated by \(I\). Then \(I^G\) is generated by a finite set \(\{x_1, \dots, x_n\} \subseteq RH\), since \(RG\) is Noetherian. We claim that \(\{x_1, \dots, x_n\}\) generates \(I\). Indeed, any \(y \in I\) can be written as
    \[
        y = a_1 x_1 + \dots + a_n x_n
    \]
    for some elements \(a_i \in RG\). For each \(i = 1, \dots, n\), write \(a_i = b_i + c_i\), where \(b_i \in RH\), and the support of \(c_i\) is contained in \(G \smallsetminus H\). Then the equation
    \[
        y = (b_1 x_1 + \dots + b_n x_n) + (c_1x_1 + \dots + c_nx_n)
    \]
    implies that \(c_1x_1 + \dots + c_nx_n = 0\), and therefore that \(I\) is generated by the elements \(x_1, \dots, x_n\). \qedhere
\end{proof}

\begin{lem}
    Let \(G\) be a group and \(R\) be a ring such that \(RG\) is Noetherian. Then \(G\) is of type \(\FP_\infty(R)\).
\end{lem}
\begin{proof}
    We construct a resolution of the trivial \(RG\)-module \(R\) by induction. The augmentation ideal, i.e.|the kernel of the map \(RG \rightarrow R\) sending all the group elements to \(1 \in R\), is finitely generated, so there is a partial resolution \(RG^{n_1} \rightarrow RG \rightarrow R \rightarrow 0\) for some integer \(n_1\). Now suppose that we have constructed a partial resolution
    \[
        RG^{n_i} \rightarrow RG^{n_{i-1}} \rightarrow \dots \rightarrow RG^{n_1} \rightarrow RG \rightarrow R \rightarrow 0.
    \]
    Since \(RG^{n_i}\) is finitely generated and \(RG\) is a Noetherian ring, it is a Noetherian \(RG\)-module. Hence, the kernel of \(RG^{n_i} \rightarrow RG^{n_{i-1}}\) is finitely generated, and we can continue the resolution with \(RG^{n_{i+1}} \rightarrow RG^{n_i} \rightarrow RG^{n_{i-1}}\) for some integer \(n_{i+1}\). \qedhere
\end{proof}

\begin{cor}
    Let \(G\) be a locally indicable group with \(\cd(G) < \infty\). If \(\Z G\) is Noetherian, then \(G\) is poly-\(\Z\).
\end{cor}
\begin{proof}
    We prove the claim by induction on \(\cd(G)\). If \(\cd(G) = 0\), then \(G = \{1\}\) is trivially poly-\(\Z\). Now suppose that \(\cd(G) > 0\) and let \(1 \rightarrow H \rightarrow G \rightarrow \Z \rightarrow 1\) be a short exact sequence (which exists because \(G\) is locally indicable). By the previous two lemmas, all the groups appearing in the short exact sequence are of type \(\FP\). By \cite[Theorem 2.4]{Feldman71}, \(\cd(H) < \cd(G)\), so \(H\) is poly-\(\Z\) by induction. Hence, \(G\) is poly-\(\Z\). \qedhere
\end{proof}

\bibliographystyle{alpha}
\bibliography{Fisher}

\end{document}